\theoremstyle{plain}
\newtheorem{thm}{Theorem}[section]
\newtheorem{lem}[thm]{Lemma}
\newtheorem{claim}{Claim}
\newtheorem{step}{Step}
\newtheorem{prop}[thm]{Proposition}
\theoremstyle{definition}
\theoremstyle{remark}
\newtheorem{rem}{Remark}[section]
\numberwithin{equation}{section}
\newcommand{\Z}{\mathbb Z}
\newcommand{\C}{\mathbb C}
\newcommand{\N}{\mathbb N}
\newcommand{\p}{\mathbb P}
\newcommand{\G}{\mathbb G}
\newcommand{\PP}{{\mathbb P}}
\newcommand{\Er}{E}
\newcommand{\OO}{\mathcal{O}}
\newcommand{\FF}{\mathcal{F}}
\newcommand{\GG}{\mathcal{G}}
\newcommand{\II}{\mathcal{I}}
\newcommand{\EE}{\mathcal{E}}
\newcommand{\LL}{\mathcal{L}}
\newcommand{\cP}{\mathcal{P}}
\newcommand{\HHH}{\mathcal{H}}
\newcommand{\Q}{\mathcal{Q}}
\newcommand{\cC}{\mathcal{C}}
\newcommand{\pga}{\Big{(}}
\newcommand{\pgc}{\Big{)}}
\newcommand{\ra}{\rightarrow}
\newcommand{\epi}{\twoheadrightarrow}
\newcommand{\transpose}[1]{{#1}^{\intercal}}
\DeclareMathOperator{\HH}{H} \DeclareMathOperator{\hh}{h}
\DeclareMathOperator{\Ext}{Ext} \DeclareMathOperator{\Hom}{Hom}
\DeclareMathOperator{\im}{Im} 
\DeclareMathOperator{\Ker}{Ker} 
\DeclareMathOperator{\coker}{Coker} 
\DeclareMathOperator{\id}{id}
\DeclareMathOperator{\T}{T}
 \DeclareMathOperator{\SL}{\mathrm{SL}}
\DeclareMathOperator{\Kom}{Kom}
\DeclareMathOperator{\K}{K}
\DeclareMathOperator{\D}{D}
\def \p {\partial}
\def \xr {\xrightarrow}
\def \kk {\boldsymbol{k}}
\def \bE {\boldsymbol{E}}
\def \bF {\boldsymbol{F}}
\def \bM {\boldsymbol{M}}
\begin{document}

\title{Linear spaces of matrices of constant rank and instanton bundles}

\author{Ada Boralevi}
\address{Scuola Internazionale Superiore di Studi Avanzati, via Bonomea 265, 34136 Trieste, Italy}
\curraddr{}
\email{ada.boralevi@sissa.it}

\author{Daniele Faenzi}
\address{Universit\'e de Pau et des Pays de l'Adour, Avenue de l'Universit\'e - BP 576 - 64012 PAU Cedex - France}
\curraddr{}
\email{daniele.faenzi@univ-pau.fr}

\author{Emilia Mezzetti}
\address{Universit\`a degli Studi di Trieste, Dipartimento di Matematica e Geoscienze, Via Valerio 12/1, 34127 Trieste, Italy}
\curraddr{}
\email{mezzette@units.it}

\thanks{Research partially supported by the Research Network Program GDRE-GRIFGA. A.Boralevi and E.Mezzetti were supported by MIUR funds, PRIN 2010-2011 
project \lq\lq Geometria delle variet\`a algebriche'', and by Universit\`a degli Studi di Trieste --  FRA 2011. D. Faenzi supported by ANR project GEOLMI} 

\subjclass[2010]{14J60, 15A30, 14F05}

\keywords{Skew-symmetric matrices, constant rank, instanton bundles, derived category}


\begin{abstract}
  We present a new method to study  $4$-dimensional linear spaces of skew-symmetric matrices of constant co-rank 2, 
  based on rank $2$ vector bundles on $\PP^3$ and derived category tools.
  The method allows one to prove the existence of new examples of size $10 \times 10$ and $14\times 14$ 
  via instanton bundles of charge $2$ and $4$ respectively, and it provides an explanation 
  for what used to be the only known example (Westwick 1996). We also give an algorithm to construct explicitly a matrix of size $14$ of this type. 
\end{abstract}

\maketitle

\section{Introduction}

Given two vector spaces $V$ and $W$ of dimension $n$ and $m$ respectively, we consider a $d$-dimensional vector subspace 
$A$ of $V\otimes W \simeq \Hom (V^*,W)$. 
Fixing bases, we can either write down $A$ as an $m \times n$ matrix whose
entries are linear forms in $d$ variables, or think of $A$ as a matrix whose coefficients are linear forms, i.e. a
map $V^*\otimes\OO_{\PP^{d-1}}\longrightarrow
W\otimes\OO_{\PP^{d-1}}(1)$. 
We say that $A$ has constant rank if every non-zero element of $A$ has
the same rank, or equivalently if the map $V^*\otimes\OO_{\PP^{d-1}}\longrightarrow
W\otimes\OO_{\PP^{d-1}}(1)$, evaluated at every point of $\PP^{d-1}$,
has the same rank.

The interest for this kind of matrices, founded on the classical work by Kronecker and Weierstrass, bears to 
different contexts: linear algebra, theory of degeneracy of vector bundles, study of varieties with
degenerate dual, to cite only a few. 
When analysing these matrices, algebraic geometry appears as a natural
tool, as perhaps first observed by J. Sylvester \cite{sylvester}; for instance characteristic classes of vector
bundles prove to be very useful.
Indeed, since $A$ has constant rank, say $r$,
we obtain a sequence of vector bundles as kernel and cokernel of $A$:
\begin{equation}\label{seq intro}
  0 \to K_A \to V^* \otimes \OO_{\PP^{d-1}}\xr{A} W \otimes
  \OO_{\PP^{d-1}}(1) \to N_A \to 0.
\end{equation}
The computation of Chern classes of these bundles yields restrictions on the values that $r, n, m$ and $d$ can attain. 
Nevertheless, the problem of finding an optimal upper bound on the
dimension of the linear system $A$, given the size and the (constant)
rank of the matrices involved, is widely open in many cases. For a more
extensive introduction to the topic we refer to \cite{Ilic_JM} 
and the numerous references therein, such as \cite{Westwick,Westwick1,Eisenbud_Harris}.
In this work we deal with the special case when $A$ is skew-symmetric.
Very little is known about it, contrary to its symmetric counterpart.  
In our setting the rank $r$ is even and the maximal dimension $l(r,n)$
of a linear subspace $A$ as above  
is comprised between $n-r+1$ and $2(n-r)+1$.
Under the particular hypothesis $r=n-2$, then $3\leq l(r, r+2)\leq 5$, and  
both the kernel and cokernel bundles $K_A$ and $N_A$ have rank 2.
The initial
cases with $n\leq 8$ have been studied in \cite{Manivel_Mezzetti,Fania_Mezzetti}.  
It turns out that $l(4,6)=l(6,8)=3$.

Here, we are mostly interested in the case $d=4$.
When we began our research, the  only known $4$-dimensional space of this kind was one with $r=8$, presented in \cite{Westwick} without any explanation. We reproduce
it in formula \eqref{matrice westwick}.  
The analysis of this ``mysterious'' example has been the starting point
of our investigation. 

For $d=4$, given a skew-symmetric matrix $A$ as above of size $(r+2)$ and constant rank
$r$, one has that $N_A$ and $K_A$ are isomorphic
respectively to $E(\frac r4 +1)$ and $E(-\frac r4)$, where $E$ is a
vector bundle of rank $2$ with $c_1(E)=0$ and $c_2(E)=\frac{r(r+4)}{48}$.
The (twisted) sequence (\ref{seq intro}) then reads:
\begin{equation}
  \label{est}
 \mbox{$0 \to  E(-\frac{r}{4}-2)  \to  \OO_{\PP^3}(-2)^{r+2} \xr{A}
 \OO_{\PP^3}(-1)^{r+2} \to  E(\frac{r}{4}-1)  \to  0.$}
\end{equation}

The main question treated in this paper is how to reverse the
construction, namely, how to start from a vector bundle $E$ of rank $2$
on $\PP^3$ and obtain a skew-symmetric matrix $A$ of linear forms on $\PP^3$
having constant co-rank $2$. ($A$ will then have $E$ as kernel, up to a twist by a line bundle.)
In fact, besides $E$, one more ingredient is needed, namely a class
$\beta \in \Ext^2(E(\frac r4 -1),E(-\frac r4 -2))$ corresponding to
an extension of type \eqref{est}.
This is where the first new tool from algebraic geometry comes
into play: derived category theory.
The main idea is that, given $E$ with the allowed Chern classes
$c_1(E)=0$ and $c_2(E) = \frac{r(r+4)}{48}$, and a class $\beta$ as above, 
we can obtain a $2$-term complex $\cC$ as cone of $\beta$, interpreted as
a morphism $E(\frac r4 -1) \to E(-\frac r4 -2)[2]$
in the derived category $\D^b(\PP^3)$.
The next step entails using Beilinson's theorem to show that,
under some non-degeneracy conditions of the maps
$\mu^p_t:\HH^p(E(\frac r4 -1+t)) \to \HH^{p+2}(E(-\frac r4 -2+t))$ induced by $\beta$,
the complex $\cC$ is of the form $\OO_{\PP^3}(-2)^{r+2} \to
\OO_{\PP^3}(-1)^{r+2}$, sitting in degrees $-2$ and $-1$. The desired 
matrix $A$ then appears as differential of $\cC$.
To complete the argument, we show that $A$ is necessarily skew-symmetrizable. 
This is the content of the main Theorem \ref{nec&suff}. For the reader not familiar with the subject, 
Section \ref{idea kuz gen} also contains a brief review of the most important features of derived category theory, 
with emphasis on the tools and techniques that are used in this paper. 

Our next results aim at reducing the number of requirements on the maps $\mu^p_t$'s by imposing convenient assumptions on $E$.
First, in Theorem \ref{conto kuz gen} we show that these conditions can be significantly
simplified if $E$ has natural cohomology, i.e. $\HH^p(E(t))\ne 0$ for at most one $p$, for all $t$.
Here is where the next algebro-geometric ingredient comes in, 
namely  instanton bundles (``$k$-instantons'' if $c_2(E)=k$), 
first introduced in \cite{adhm}.
Indeed, general instantons have natural cohomology, see \cite{HH}.
Next, we use the description of the minimal graded free
resolution of a general instanton $E$ and of its cohomology module
$\bigoplus_{t \in \Z} \HH^2(E(t))$ (cf. \cite{raha, Decker}) to
further reduce the requirements on the $\mu^p_t$'s to a {\it single condition}. 
This is done in Theorem \ref{starstar gratis}.

Let us outline the main applications we draw from this method.
\begin{enumerate}
\item Westwick's example is given an explanation in terms of
  $2$-instantons.
  Taking advantage of the extensive literature  
(for example \cite{Hart_ist,Newstead,costa_ottaviani}) 
on the moduli space $M_{\PP^3}(2;0,2)$ of rank $2$ stable vector
bundles on $\PP^3$ with Chern classes $c_1=0$ and $c_2=2$, we determine that the
``Westwick instanton'' belongs to the most special orbit of
$M_{\PP^3}(2;0,2)$ under the natural action of $\SL(4)$, see
Theorem \ref{orbita speciale}.
\item We show the existence of a continuous family of examples of $10 \times 10$ matrices of
  rank $8$, all non-equivalent to Westwick's one.
  These are obtained by showing that all 2-instantons
  have classes $\beta$ satisfying the required condition, see Theorem \ref{2-ist}.
\item We show the existence of a continuous family of new examples of $14 \times 14$ matrices of rank $12$, starting with general $4$-instantons, see Theorem
  \ref{4-ist}.
\item We exhibit an explicit example of a $14 \times 14$ skew-symmetric matrix of
  constant rank $12$ in $4$ variables, together with an
  algorithm capable of constructing infinitely many of them, see the Appendix.
\end{enumerate}

In all instances above, $\beta$ is a general element of
$\Ext^2(E(\frac r4 -1),E(-\frac r4 -2))$.
This is not at all surprising: the non-degeneracy conditions on the $\mu^p_t$'s are
open, so once an element $\beta_0$ satisfies the requirements, the same will hold for the general element $\beta$.
Still, the construction of examples of higher size does not seem straightforward. However, we
believe that this method is promising, and that it can be potentially applied to other varieties
than $\PP^3$, and to other linear subspaces than the one we
considered. We would also like to develop algorithmic methods to
find  explicit expressions of our new examples, generalising what is
done in the Appendix. We are currently working on these further applications.

\vspace{0,2cm}

The paper is organised as follows: in Section \ref{general} we introduce the problem and the state of the art, 
focusing on the case of $4$-dimensional linear subspaces of skew-symmetric matrices of constant co-rank $2$. 
Section \ref{idea kuz gen} contains our main results.
Here we characterise these spaces in terms of necessary and sufficient
conditions to impose on the maps $\mu^p_t$ introduced above.
In Section \ref{sezione istantoni generali} we show how and why the most natural candidates for our purposes are general instantons. 
The last Sections \ref{caso 8} and \ref{caso 12} are devoted to show the existence 
of new examples. Finally in the Appendix we give an algorithmic method that explicitly gives a new matrix of size $14$.

\vspace{0,2cm}

\noindent \textbf{Acknowledgements.} The authors are indebted to A. Kuznetsov
for suggesting the use of derived categories, and for directing us
towards the main idea.
In particular the first named author is grateful for the interesting discussions had in Trieste.\\ 
The authors would also like to thank the referees for their valuable suggestions.

\section{General set-up and the co-rank two case}\label{general}

Here we introduce some preliminary material. We work over the field
$\C$ of complex numbers, although all the constructions seem to carry
over smoothly to any algebraically closed field of characteristic other than $2$.
Given a vector space $W$ over $\C$, of dimension $d$, we denote by $W^*= \Hom(W,\C)$ its dual, and 
we fix a determinant form so that $W \simeq W^*$.
The projective space $\PP^{d-1}=\PP W$ is the space of lines through $0$, thus $\HH^0(\OO_{\PP W}(1)) = W^*$. 

Any claim about a {\it general element} in a given parameter space
means that the claim holds for all elements away from a countable
union of Zariski closed subsets of the parameters.

\subsection{General set-up}\label{general set-up}

Let $V$ be a vector space of dimension $n$ over $\C$, and let $A \subseteq V \otimes V$ be a linear subspace of dimension $d$. 
We say that $A$ has \emph{constant rank} $r$ if every non-zero element of $A$ has rank $r$. Given a basis for $V$, 
we can write down $A$ as a $n \times n$ matrix of linear forms in $d$ variables, that we denote by the same letter $A$.

The matrix $A$ can be viewed naturally as a map $V^* \otimes \OO_{\PP A}\xrightarrow{A} V \otimes \OO_{\PP A}(1)$, inducing the following exact sequence, where 
$K_A$ and $N_A$ denote the kernel and cokernel of $A$, and $\EE_A$ its image:
\begin{equation}\label{se gen}
  \xymatrix@C-2ex@R-3ex{0 \ar[r] & K_A \ar[r] & V^* \otimes \OO_{\PP A}\ar[rr]^-A \ar[dr] && V \otimes \OO_{\PP A}(1)\ar[r]&N_A \ar[r]&0\\
    &&&\EE_A \ar[dr] \ar[ur]&&&\\
    &&0 \ar[ur]&&0&&\\}.
\end{equation}
From the assumption that $A$ has constant rank $r$ we deduce that $K_A,N_A$ and $\EE_A$ are vector bundles on $\PP A$. 
(That are of rank $n-r$, $n-r$ and $r$ respectively.)

A computation of invariants of these vector bundles shows that there is a bound on the maximal dimension $l(r,n)$ that 
such a subspace $A \subseteq V \otimes V$ can attain. Westwick \cite{Westwick1} proved that for $2 \leq r \leq n$: 
\begin{equation}\label{bounds}
  n-r+1 \leq l(r,n) \leq 2(n-r) +1.
\end{equation}
Moreover for a given $d=\dim A$ with $n-r+1 \le d \leq 2(n-r) +1$, only
some values of $r$ are allowed. We refer to \cite{Ilic_JM,Westwick1,Westwick} for further details.

We now focus on the special case $r=n-2$.
In this case $K_A$ and $N_A$ are vector bundles of rank $2$, and the
bounds (\ref{bounds}) on the maximal dimension of $A$ become $3 \leq
l(r,r+2) \leq 5$. 

Specialising even more, let us now suppose that the subspace $A$ lies either in $\wedge^2 V$ or in $S^2 V$. Then the 
skew-symmetry (or the symmetry) of the matrix yields a symmetry of the
exact sequence (\ref{se gen}). In particular: $N_A \simeq K^*_A(1)$,
and  
$\EE_A^* \simeq \EE_A(1)$.
The same computation of invariants as above entails that the first
Chern class $c_1(\EE)= c_1(K^*)= \frac{r}{2}$, and thus the rank $r$
is even. Moreover from the rank $2$ hypothesis we get that $K_A^*\simeq
K_A(\frac{r}{2})$.

\begin{rem}\label{fibrati iso}
  The natural action of $\SL(n)$ on $V$ extends to an action on the linear subspaces of $\wedge^2V$ (or $S^2 V$). Let $g\in \SL(n)$ act on a subspace $A$, and denote $A'=g A$. 
  Then we have a commutative diagram:
  \[
  \xymatrix
  {0\ar[r] &K_A \ar[r]&V^*\otimes\OO_{\PP A} \ar[r]^-{A}& V\otimes\OO_{\PP A}(1)\ar[r] &K_A^*(1)\ar[r]& 0\\
    0 \ar[r] &K_{A'} \ar[r]\ar[u]^-{g^*}&V^*\otimes\OO_{\PP A'} \ar[u]^-{g^*}\ar[r]^-{A'} &V\otimes\OO_{\PP A'}(1)\ar[u]^-{g^*} \ar[r]&K^*_{A'}(1)\ar[r]\ar[u]^-{g^*}&0}
  \]
  which shows that the kernel bundles $K_A$ and $K_{A'}$ are isomorphic.
  Similarly, if $h\in \SL(d)$, then $h: A\rightarrow A$ induces:
  \[
  \xymatrix
  {0\ar[r] &K_A \ar[r]&V^*\otimes\OO_{\PP A} \ar[r]^-{A}& V\otimes\OO_{\PP A}(1) \ar[r]&K_A^*(1)\ar[r]& 0\\
    0 \ar[r] &K_{(h A)} \ar[r]\ar[u]^-{h^*}&V^*\otimes\OO_{\PP(h A)} \ar[r]^-{h A} \ar[u]^-{h^*}&V\otimes\OO_{\PP(h A)}(1)\ar[u]^-{h^*} \ar[r]&K^*_{(h A)}(1)\ar[u]^-{h^*}\ar[r]&0}
  \]
  so $K_A=h^*(K_{h A}).$

  Altogether, there is an action of $\SL(n) \times \SL(d)$ on the
  linear spaces of skew-symmetric (or symmetric)  matrices of
  constant rank.
  If two matrices $A$ and $B$ are equivalent under
  this action,  
  the corresponding vector bundles $K_A$ and $K_B$ will 
  belong to the same orbit under the action of $\SL(d)$.
\end{rem}

\subsection{Skew-symmetric matrices of constant co-rank two}\label{skew-symm set-up}
We are interested in the case where $A \subseteq \wedge^2 V$, i.e. linear spaces of skew-symmetric matrices of constant
co-rank $2$. The case $6 \times 6$ is treated in
\cite{Manivel_Mezzetti}, and the case $8 \times 8$ in
\cite{Fania_Mezzetti}. In both instances the maximal dimension of $A$
is $3$. 

The state of the art when we began working on the subject---at the
best of our knowledge---was the following: many examples were known
for $d=3$,  
no examples for $d=5$, and only one example for $d=4$, appearing in
\cite{Westwick}. The example has $r=8$, which is  
the smallest value allowed for dimension $d=4$.

From now on we work on the case $d=4$. $A$ will always denote a skew-symmetric matrix of linear forms in $4$ variables,
having size $r+2$ and constant rank $r$, kernel $K_A$ and cokernel $K_A^*(1)$.

\begin{lem}
Let $A$ be as above. Then $K_A \simeq E(-\frac r4)$, where $E$ is an indecomposable rank $2$ vector
bundle on $\PP^3$ with Chern classes:
\[
c_1(E)=0 \quad \hbox{and} \quad c_2(E)=\frac{r(r+4)}{48}.
\]
It follows that $r$ is of the form $12s$ or $12s-4$, for some $s \in \N$.
\end{lem}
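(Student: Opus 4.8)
The plan is to read off every invariant of $K_A$ from the Chern class identity attached to the four-term exact sequence \eqref{se gen} (specialised to $d=4$, $n=r+2$), and then to define $E$ as the normalisation of $K_A$. Writing $h$ for the hyperplane class on $\PP^3$, the key simplification is that the skew-symmetry collapses the cokernel into a \emph{twist} of the kernel: since $K_A^*\simeq K_A(\tfrac r2)$ is already in hand, one has $K_A^*(1)\simeq K_A(\tfrac{r+2}{2})$, so the sequence reads $0\to K_A\to \OO_{\PP^3}^{\,r+2}\to \OO_{\PP^3}(1)^{r+2}\to K_A(\tfrac{r+2}{2})\to 0$. Taking the alternating product of total Chern classes (and using that the trivial bundle contributes $1$) gives the single relation
\[
c(K_A)\,(1+h)^{r+2}=c\!\left(K_A(\tfrac{r+2}{2})\right)\pmod{h^4}.
\]
Because $K_A$ has rank $2$, both sides are explicit polynomials in the two unknowns $c_1:=c_1(K_A)$ and $c_2:=c_2(K_A)$.

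Next I would compare coefficients of $h^k$ on $\PP^3$. The coefficients of $h$ and $h^2$ only reconfirm the twist amount $\tfrac{r+2}{2}$ and the already known value $c_1=-\tfrac r2$; the new information sits in the coefficient of $h^3$. Since the right-hand side is the Chern class of a rank $2$ bundle it carries no $h^3$ term, so one gets $\binom{r+2}{3}+c_1\binom{r+2}{2}+(r+2)\,c_2=0$. Substituting $c_1=-\tfrac r2$ and dividing by $r+2$ yields $c_2(K_A)=\tfrac{r(r+1)}{12}$. Setting $E:=K_A(\tfrac r4)$ and applying the rank $2$ twisting formulas $c_1(F(k))=c_1(F)+2k$ and $c_2(F(k))=c_2(F)+k\,c_1(F)+k^2$ gives $c_1(E)=-\tfrac r2+2\cdot\tfrac r4=0$ and $c_2(E)=\tfrac{r(r+1)}{12}-\tfrac{r^2}{8}+\tfrac{r^2}{16}=\tfrac{r(r+4)}{48}$, exactly as claimed.

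Finally I would extract the divisibility statement from integrality and settle indecomposability. The invariant $c_2(K_A)=\tfrac{r(r+1)}{12}$ must be an integer, so $12\mid r(r+1)$; as $r$ is even, $r+1$ is odd and coprime to $r$, hence the factor $4$ divides $r$, which in particular makes $E=K_A(\tfrac r4)$ a genuine twist with $c_1(E)=0$ and legitimises the normalisation $K_A\simeq E(-\tfrac r4)$. The remaining factor $3$ divides $r$ or $r+1$, and combined with $4\mid r$ this forces $r\equiv 0,8\pmod{12}$, i.e. $r=12s$ or $r=12s-4$. For indecomposability I would use that $c_2(E)=\tfrac{r(r+4)}{48}>0$ for every admissible $r$: were $E$ decomposable it would split on $\PP^3$ as $\OO_{\PP^3}(a)\oplus\OO_{\PP^3}(-a)$ (using $c_1(E)=0$ and $\Pic(\PP^3)=\Z$), giving $c_2(E)=-a^2\le 0$, a contradiction. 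The computation is essentially routine; the only steps demanding care are the bookkeeping of the $h^3$ coefficient and the passage from integrality of $c_2(K_A)$ to the divisibility $4\mid r$, since this is precisely what upgrades the a priori relation $c_1(K_A)=-\tfrac r2$ to the clean normalisation $c_1(E)=0$.
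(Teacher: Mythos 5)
Your proposal is correct and follows essentially the same route as the paper: both extract $c_2(K_A)=\tfrac{r(r+1)}{12}$ from the Chern polynomial identity $c_t(K_A^*(1))=(1+t)^{r+2}c_t(K_A)$ forced by the four-term sequence, normalise to $E=K_A(\tfrac r4)$, and read off the divisibility from integrality of the Chern classes. Your indecomposability argument (a splitting $\OO_{\PP^3}(a)\oplus\OO_{\PP^3}(-a)$ would force $c_2(E)=-a^2\le 0$) is just an unpacking of the paper's one-line remark that the Chern polynomial $t^2+c_2(E)$ is irreducible over $\Z$, so the two proofs agree in substance throughout.
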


\begin{proof}
  We have already remarked that the condition $c_1(K^*_A)= \frac{r}{2}$ is entailed by the invariants of the bundles in (\ref{se gen}). 
  Imposing the further condition that the cokernel is isomorphic to $K^*_A(1)$ forces the Chern polynomials to satisfy the equality 
  $c_t(K_A^*(1))=(1+t)^{r+2}c_t(K_A)$. From this we get $c_2(K_A)=\frac{r(r+1)}{12}$, and a direct computation then 
  shows that $E=K_A(\frac r4)$ has the desired Chern classes.
  Moreover since the Chern polynomial of $E$ is irreducible over $\Z$, we deduce that $E$ is indecomposable.
\end{proof}

The exact sequence (\ref{se gen}) can then be written as follows: 
  \begin{equation}\label{se E} 
    \mbox{$0 \to E(-\frac{r}{4}) \to  \OO_{\PP^3}^{r+2}
      \xr{A}    \OO_{\PP^3}(1)^{r+2}\to E(\frac{r}{4}+1)
      \to 0$}.
  \end{equation}
It will be useful to have Riemann-Roch formula at hand, see for
instance \cite[Appendix A]{Har}.
It implies that the algebraic Euler characteristic
$\chi(E)=\sum_i(-1)^i\hh^i(E)$ of a vector bundle $E$ of rank $2$ on
$\PP^3$ with Chern classes $c_1$, $c_2$ is:
\[
\chi(E)=\frac 16 {c}_{1}^{3}- \frac 12 {c}_{1} {c}_{2}+{c}_{1}^{2}-2
{c}_{2}+ \frac {11}6
      {c}_{1}+2.
\]

\section{Main construction}\label{idea kuz gen}

In this section we will state and prove our main result, Theorem
\ref{nec&suff}.
This result establishes a necessary and sufficient condition for a
rank-$2$ vector bundle on $\PP^3$, together with a certain cohomology
class, to give a skew-symmetric matrix of constant co-rank $2$.
The results of this section continue to hold if we replace $\C$ with
an algebraically closed field of characteristic different from $2$.

\subsection{Necessary conditions}

We work in the setting described in Section \ref{skew-symm set-up}:
assume that $A \subseteq \wedge^2 V$ is a $4$-dimensional linear
subspace  
of skew-symmetric matrices of size $r+2$ and constant rank $r$. 
Take the sequence (\ref{se E}) and tensor it by $\OO_{\PP^3}(-2)$:
\begin{equation} \label{se E twist}
  \xymatrix@C-2.5ex@R-4ex{0 \ar[r] & E(-\frac r4-2) \ar[r] & \OO_{\PP^3}(-2)^{r+2}\ar[rr]^-A \ar[dr] && \OO_{\PP^3}^{r+2}(-1)\ar[r]&E(\frac r4-1) \ar[r]&0\\
    &&&\EE(-2) \ar[dr] \ar[ur]&&&\\
    && 0 \ar[ur]&&0&&\\}
\end{equation}

The $4$-term sequence above corresponds to
an element:
\[
\mbox{$\beta \in \Ext^2(E(\frac{r}{4}-1),E(-\frac{r}{4}-2)).$}
\]
For all integers $t,p$, the composition of boundary
maps in the cohomology long exact
sequence of \eqref{se E twist} gives maps:
\[
\begin{array}{l}
\mu^p_t : \HH^p(E(\frac{r}{4}-1+t)) \to \HH^{p+2}(E(-\frac{r}{4}-2+t)).
\end{array}
\]
The maps $\mu^p_t$ can also be thought of as the cup product
with the cohomology class $\beta$ in the Yoneda product:
\[
\mbox{$
\Ext^2(E(\frac{r}{4}-1),E(-\frac{r}{4}-2)) \otimes
\Ext^p(\OO_{\PP^3}(-t),E(\frac{r}{4}-1)) \to
\Ext^{p+2}(\OO_{\PP^3}(-t),E(-\frac{r}{4}-2))$}.
\]

\begin{lem}
  The exact sequence \eqref{se E twist} gives the following: 
  $$  (\star)\:\:\:\:\left\{ \begin{array}{ll}
      \mu^p_0:\HH^p(E(\frac{r}{4}-1))\simeq \HH^{p+2}(E(-\frac{r}{4}-2)), &p=0,1;\\
      \HH^q(E(\frac{r}{4}-1))= \HH^{q-2}(E(-\frac{r}{4}-2))=0, &q=2,3.
    \end{array}\right.$$
\end{lem}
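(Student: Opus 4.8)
The plan is to split the four-term sequence \eqref{se E twist} into the two short exact sequences determined by the image bundle $\EE(-2)$, namely
\[
0 \ra E(-\tfrac r4-2) \ra \OO_{\PP^3}(-2)^{r+2} \ra \EE(-2) \ra 0
\]
and
\[
0 \ra \EE(-2) \ra \OO_{\PP^3}(-1)^{r+2} \ra E(\tfrac r4-1) \ra 0,
\]
and to recall that by construction $\mu^p_0$ is the composite of the connecting homomorphisms of these two sequences. The key observation driving everything is that both middle terms are \emph{acyclic}: on $\PP^3$ a line bundle $\OO_{\PP^3}(k)$ has nonvanishing cohomology only in degree $0$ (when $k\ge 0$) or in degree $3$ (when $k\le -4$), so $\OO_{\PP^3}(-1)$ and $\OO_{\PP^3}(-2)$ have no cohomology in any degree.

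First I would feed this acyclicity into the long exact cohomology sequence of each short exact sequence above. Since every $\HH^j\big(\OO_{\PP^3}(-2)^{r+2}\big)$ vanishes, the first sequence yields $\HH^0(E(-\tfrac r4-2))=0$ together with connecting isomorphisms $\HH^p(\EE(-2))\simeq \HH^{p+1}(E(-\tfrac r4-2))$ for every $p$; in particular $\HH^3(\EE(-2))\simeq\HH^4(E(-\tfrac r4-2))=0$ on the three-dimensional $\PP^3$. Likewise, since every $\HH^j\big(\OO_{\PP^3}(-1)^{r+2}\big)$ vanishes, the second sequence yields $\HH^0(\EE(-2))=0$ and connecting isomorphisms $\HH^p(E(\tfrac r4-1))\simeq \HH^{p+1}(\EE(-2))$ for every $p$.

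Composing the two chains of isomorphisms gives $\mu^p_0\colon \HH^p(E(\tfrac r4-1))\simeq \HH^{p+2}(E(-\tfrac r4-2))$ for all $p$, which is exactly the asserted isomorphism for $p=0,1$. For $p=2,3$ the target $\HH^{p+2}(E(-\tfrac r4-2))$ sits in cohomological degree $\ge 4$ on $\PP^3$ and hence vanishes, forcing $\HH^q(E(\tfrac r4-1))=0$ for $q=2,3$. The remaining vanishings $\HH^0(E(-\tfrac r4-2))=\HH^1(E(-\tfrac r4-2))=0$ come essentially for free: the first was already read off at the left end of the first long exact sequence, and the second follows by combining $\HH^1(E(-\tfrac r4-2))\simeq\HH^0(\EE(-2))$ (first sequence) with $\HH^0(\EE(-2))=0$ (second sequence).

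I do not expect a genuine obstacle here: the whole argument is a bookkeeping exercise powered by the acyclicity of the two middle line bundles. The only point requiring care is tracking the \emph{endpoint} terms of the long exact sequences, so as to extract all four vanishing statements rather than merely the two isomorphisms; in particular, one must use both short exact sequences jointly to kill $\HH^1(E(-\tfrac r4-2))$.
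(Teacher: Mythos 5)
Your proof is correct and follows essentially the same route as the paper's: both split \eqref{se E twist} into the two short exact sequences through the image bundle $\EE(-2)$, exploit the total acyclicity of $\OO_{\PP^3}(-1)$ and $\OO_{\PP^3}(-2)$, and compose the resulting connecting isomorphisms to obtain $\mu^p_0$ and the vanishings. The only cosmetic difference is that you derive the degree-$2,3$ vanishings uniformly from the composite isomorphism landing in $\HH^{\ge 4}(\PP^3)=0$, whereas the paper reads them off term by term from the two long exact sequences; the content is identical.
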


\begin{proof}
This is a direct consequence of the fact that the cohomology of both $\OO_{\PP^3}(-2)$ and $\OO_{\PP^3}(-1)$ 
vanishes in all degrees. 
The vanishing $\HH^0(E(-\frac{r}{4}-2))=\HH^3(E(\frac{r}{4}-1))=0$ are immediate. The same holds for 
$\HH^0(\EE(-2))=\HH^3(\EE(-2))=0$, which in turn implies
$\HH^1(E(-\frac{r}{4}-2))=\HH^2(E(\frac{r}{4}-1))=0$.
From the rightmost 
short exact sequence in \eqref{se E twist}
we deduce the isomorphisms $\HH^p(E(\frac{r}{4}-1))\simeq \HH^{p+1}(\EE(-2))$, for $p=0,1$, while from 
the leftmost one we get, for the same values of $p$, the isomorphism $\HH^{p+2}(E(-\frac{r}{4}-2)) \simeq \HH^{p+1}(\EE(-2))$.
\end{proof}

\begin{lem}   The exact sequence \eqref{se E twist} gives the following: 
$$  (\star\star)\:\:\:\:\left\{ \begin{array}{ll}
    \mu^0_1:\HH^0( E (\frac{r}{4} )) \epi \HH^2 ( E ( -\frac{r}{4} -1));&\\
    \mu^1_1:\HH^1(E(\frac{r}{4})) \simeq \HH^3 ( E (-\frac{r}{4}-1));&\\
    \HH^2(E(\frac{r}{4}))=\HH^3(E(\frac{r}{4}))=\HH^0(E(-\frac{r}{4}-1))=0.
  \end{array}\right.$$
\end{lem}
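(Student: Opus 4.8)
The plan is to repeat the argument used for $(\star)$, but now with the four-term sequence \eqref{se E twist} tensored by $\OO_{\PP^3}(1)$, so that the maps read off from the long exact cohomology sequences are exactly the $\mu^p_1$. After this twist the sequence becomes
\[
0 \to E(-\tfrac r4-1) \to \OO_{\PP^3}(-1)^{r+2} \xr{A} \OO_{\PP^3}^{r+2} \to E(\tfrac r4) \to 0,
\]
and I would split it at its image $\EE(-1)$ into the two short exact sequences
\[
0 \to E(-\tfrac r4-1) \to \OO_{\PP^3}(-1)^{r+2} \to \EE(-1) \to 0, \qquad
0 \to \EE(-1) \to \OO_{\PP^3}^{r+2} \to E(\tfrac r4) \to 0,
\]
the map $\mu^p_1$ being by definition the composite of the two connecting homomorphisms attached to these sequences.

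First I would exploit the total vanishing $\HH^i(\OO_{\PP^3}(-1))=0$ for all $i$. Applied to the leftmost sequence this yields isomorphisms $\HH^p(\EE(-1)) \simeq \HH^{p+1}(E(-\tfrac r4-1))$ for every $p$; in particular $\HH^0(E(-\tfrac r4-1))$ injects into $\HH^0(\OO_{\PP^3}(-1))^{r+2}=0$, giving the vanishing $\HH^0(E(-\tfrac r4-1))=0$ at once. Next I would use that $\OO_{\PP^3}$ has cohomology concentrated in degree $0$. Feeding this into the rightmost sequence gives $\HH^p(E(\tfrac r4)) \simeq \HH^{p+1}(\EE(-1))$ for $p\ge 1$, while for $p=0$ the term $\HH^0(\OO_{\PP^3})^{r+2}$ forces instead the exact piece $0\to \HH^0(\EE(-1))\to \HH^0(\OO_{\PP^3})^{r+2}\to \HH^0(E(\tfrac r4))\xr{\delta}\HH^1(\EE(-1))\to 0$, so that the connecting map $\delta$ is only surjective.

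Composing with the isomorphisms coming from the leftmost sequence, I would then obtain $\mu^1_1:\HH^1(E(\tfrac r4))\simeq \HH^3(E(-\tfrac r4-1))$ and the surjection $\mu^0_1:\HH^0(E(\tfrac r4))\epi \HH^2(E(-\tfrac r4-1))$, which are the first two lines of $(\star\star)$. The remaining vanishings drop out of the same chain: $\HH^2(E(\tfrac r4))\simeq \HH^3(\EE(-1))\simeq \HH^4(E(-\tfrac r4-1))=0$ and $\HH^3(E(\tfrac r4))\simeq\HH^4(\EE(-1))=0$ for dimension reasons. There is no genuine obstacle here; the only point deserving care is the $p=0$ step, where the global sections of $\OO_{\PP^3}$ degrade the expected isomorphism to a mere surjection and thereby account for the $\epi$ in the statement.
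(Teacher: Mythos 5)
Your proof is correct and follows essentially the same route as the paper: twisting \eqref{se E} by $\OO_{\PP^3}(-1)$ (equivalently \eqref{se E twist} by $\OO_{\PP^3}(1)$), splitting at the image $\EE(-1)$, and using the vanishing of the cohomology of $\OO_{\PP^3}(-1)$ together with the fact that $\OO_{\PP^3}$ has cohomology only in degree $0$ to read off the isomorphisms, the surjection $\HH^0(E(\frac r4)) \epi \HH^1(\EE(-1)) \simeq \HH^2(E(-\frac r4 -1))$, and the vanishings. Your write-up is in fact somewhat more detailed than the paper's, which simply invokes the argument of the previous lemma with the new twist.
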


\begin{proof}
We apply the argument of the previous Lemma to (\ref{se E}) twisted by
$\OO_{\PP^3}(-1)$ instead of $\OO_{\PP^3}(-2)$. 
The vanishing $\HH^2(E(\frac{r}{4}))=\HH^3(E(\frac{r}{4}))=\HH^0(E(-\frac{r}{4}-1))=0$ is immediate, as in the previous proof.
Moreover the cohomology of the sequence on the left-hand side of
\eqref{se E twist} yields $\HH^i(E(-\frac{r}{4}-1)) \simeq \HH^{i-1}(\EE(-1))$ for $i=1,2$ and $3$, and 
the sequence on the right-hand side this time gives an isomorphism $\HH^1(E(\frac{r}{4}))\simeq \HH^2(\EE(-1))$, and also
an exact sequence:
\[
\mbox{$0 \to \HH^0(\EE(-1)) \to \C^{r+2} \to \HH^0(E(\frac{r}{4})) \to \HH^1(\EE(-1)) \to 0$.}
\]
This concludes the proof.
\end{proof}

\subsection{The conditions are sufficient}

Our first main result is that conditions $(\star)$ and $(\star\star)$ are not only necessary, but also sufficient.

\begin{thm}\label{nec&suff}
  Let $r$ be a fixed integer number of the form $12s$ or $12s-4$, $s \in \N$. Let $E$ be a rank 2 vector bundle on $\PP^3$, 
  with $c_1(E)=0$ and $c_2(E)=\frac{r(r+4)}{48}$. There exists a
  skew-symmetric matrix $A$ of linear forms, having size $r+2$, constant rank $r$, 
  and $E(-\frac{r}{4}-2)$ as its kernel, if and only if there exists $\beta \in \Ext^2(E(\frac{r}{4}-1),E(-\frac{r}{4}-2))$ 
  that induces $(\star)$ and $(\star\star)$.
\end{thm}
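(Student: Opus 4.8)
The necessity is already established: the two preceding lemmas show that the $4$-term sequence \eqref{se E twist} attached to a skew-symmetric $A$ represents a class $\beta\in\Ext^2(E(\frac r4-1),E(-\frac r4-2))$ whose boundary maps are the $\mu^p_t$, and that these satisfy $(\star)$ and $(\star\star)$. For the converse I would start from $E$ and such a $\beta$, read $\beta$ as a morphism $E(\frac r4-1)\to E(-\frac r4-2)[2]$ in $\D^b(\PP^3)$, and let $\cC$ be the suitably shifted cone, so that $\cC$ is a two-term object with cohomology sheaves $\HH^{-2}(\cC)=E(-\frac r4-2)=:G$ and $\HH^{-1}(\cC)=E(\frac r4-1)=:F$, glued by $\beta$. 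The whole problem is then to prove $\cC\simeq[\OO_{\PP^3}(-2)^{r+2}\xrightarrow{A}\OO_{\PP^3}(-1)^{r+2}]$ concentrated in degrees $-2,-1$: the differential $A$ is a matrix of linear forms, and $\Ker A=G$, $\coker A=F$ recover sequence \eqref{se E twist}.

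The first step is to convert $(\star)$ and $(\star\star)$ into vanishing of the hypercohomology $\HH^\bullet(\cC(t))$ for the twists that matter. From the triangle $G[2]\to\cC\to F[1]\to G[3]$, twisting by $\OO(t)$ and taking hypercohomology identifies the connecting maps with the $\mu^p_t$ and yields, for all $j$, a short exact sequence $0\to\coker\mu^{j}_{t}\to\HH^{j}(\cC(t))\to\Ker\mu^{j+1}_{t}\to0$. Condition $(\star)$ says that every $\mu^p_0$ is an isomorphism (the non-trivial ones being $\mu^0_0,\mu^1_0$, the rest relating zero groups), hence $\HH^\bullet(\cC)=0$. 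Since $E^\vee\cong E$ gives $G\cong F^\vee(-3)$, Serre duality on $\PP^3$ makes $\mu^p_t$ the transpose of $\mu^{1-p}_{-1-t}$, so that $\dim\HH^j(\cC(t))=\dim\HH^{-j}(\cC(-1-t))$; in particular $\HH^\bullet(\cC(-1))=0$. Likewise $(\star\star)$ gives $\HH^j(\cC(1))=0$ for $j\ne-1$, and the duality then gives $\HH^j(\cC(-2))=0$ for $j\ne1$.

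The second step is Beilinson's theorem. Using the resolution of the diagonal whose second-factor building blocks are $\OO(0),\OO(-1),\OO(-2),\OO(-3)$, one gets a spectral sequence $E_1^{p,q}=\HH^q(\cC\otimes\Omega^{-p}(-p))\otimes\OO(p)\Rightarrow\HH^{p+q}(\cC)$ with $-3\le p\le0$. The column $p=0$ is $\HH^q(\cC)=0$ and the column $p=-3$ is $\HH^q(\cC(-1))=0$. For the two inner columns, the Euler sequences $0\to\Omega^1(1)\to\OO^4\to\OO(1)\to0$ and $0\to\OO(-2)\to\OO(-1)^4\to\Omega^2(2)\to0$, combined with $\HH^\bullet(\cC)=\HH^\bullet(\cC(-1))=0$, give $\HH^q(\cC\otimes\Omega^1(1))\cong\HH^{q-1}(\cC(1))$ and $\HH^q(\cC\otimes\Omega^2(2))\cong\HH^{q+1}(\cC(-2))$, both concentrated in $q=0$ by the previous step. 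Thus only $E_1^{-1,0}$ and $E_1^{-2,0}$ survive, the sequence degenerates, and $\cC\simeq[\OO(-2)^{a}\xrightarrow{A}\OO(-1)^{b}]$. Comparing Hilbert polynomials with $\HH^{-2}(\cC)=G$ and $\HH^{-1}(\cC)=F$---that is, Riemann--Roch with $c_1(E)=0$ and $c_2(E)=\frac{r(r+4)}{48}$---forces $a=b=r+2$, and since $\Ker A=G$ is a rank-$2$ vector bundle, $A$ has constant rank $r$.

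It remains to see that $A$ is skew-symmetrizable, and this is the main obstacle. The relation $G\cong F^\vee(-3)$ makes $\cC$ self-dual, $\cC\cong R\Hom(\cC,\OO(-3))[3]$: the dual complex has the same cohomology sheaves, glued by the transpose class $\beta^\vee$. The gluing classes live in $\Ext^2(F,F^\vee(-3))\cong\HH^2(\End(F)\otimes\OO(m))$ for a suitable $m$, and the skew-symmetric pairing $\wedge^2F=\OO(\frac r2-2)$ splits $\End(F)=\OO\oplus\End_0(F)$ compatibly with transposition. Two facts then conspire: the trace (symmetric) summand contributes $\HH^2(\OO(m))=0$ on $\PP^3$, so $\beta$ is automatically traceless; and on $\End_0(F)$ the transpose induced by a \emph{skew} form acts as $-\Id$, so $\beta^\vee=-\beta$ and the self-duality of $\cC$ is an \emph{alternating} pairing---a feature of the odd shift $[3]$, i.e. of $\dim\PP^3$ being odd. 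Carrying this alternating self-duality through the two-term model of $\cC$ yields $\transpose{A}=-PA\,\transpose{P}$ for an invertible scalar matrix $P$, which is exactly the statement that $A$ is congruent to a skew-symmetric matrix. The delicate point I expect to absorb most of the work is precisely this sign: matching the symmetric/skew decomposition of the space of matrices to the even/odd-shift (trace/traceless) decomposition of $\Ext^2(F,F^\vee(-3))$, so as to land on the skew-symmetric and not the symmetric normal form.
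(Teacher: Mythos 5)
Your construction of the matrix itself is correct and follows the paper's own route: you read $\beta$ as a morphism in $\D^b(\PP^3)$, take the cone $\cC$, convert $(\star)$ and $(\star\star)$ (together with Serre duality) into the vanishing of the Beilinson components of $\cC$ along $\OO_{\PP^3}$ and $\OO_{\PP^3}(-3)$ and the concentration in a single spot of those along $\OO_{\PP^3}(-1)$ and $\OO_{\PP^3}(-2)$, and fix the multiplicities by Riemann--Roch. This is exactly the content of Claims \ref{claim1} and \ref{claim2}, up to the cosmetic difference that you apply Serre duality to $\cC$ directly rather than to the groups $\HH^i(E(t))$ (legitimate here, since $E^*\simeq E$).

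The gap is in the last step, the paper's Claim \ref{claim3}. You correctly isolate the key vanishing: one summand of $\Ext^2(E(\frac r4-1),E(-\frac r4-2))$ is the degree-$2$ cohomology of a line bundle on $\PP^3$ and hence zero, so every $\beta$ lies in the other summand (the paper phrases this as: every $\beta$ is symmetric, since $\HH^2(\wedge^2F^*\otimes L)=\HH^2(\OO_{\PP^3}(-\frac r2-1))=0$). But the passage from this sign property of the \emph{class} $\beta$ to the statement $\transpose{A}=-PA\,\transpose{P}$ for the \emph{differential} is precisely what requires proof, and you defer it (``the delicate point I expect to absorb most of the work''). Two ingredients are missing. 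First, knowing that $\cC$ and its twisted shifted dual represent the same extension class only produces a third complex $\Q$ with maps to both lifting the identity on the end terms, i.e.\ a homotopy equivalence, not an isomorphism of two-term complexes; rigidifying this is exactly the role of the vanishings \eqref{annullo} and \eqref{annullo2} in Lemma \ref{lemma 0123} (available here because the terms are sums of $\OO_{\PP^3}(-1)$ and $\OO_{\PP^3}(-2)$), followed by the sign-adjusted chain isomorphisms, the averaging $\psi=\frac12(\varphi-\transpose{\varphi})$ and the five lemma (char $\neq2$). Second, your sign conventions are not pinned down, which matters since landing on ``skew'' rather than ``symmetric'' is pure sign bookkeeping: you call the trace summand of $\End(F)$ ``symmetric'', but under the involution that actually governs dualization of the extension --- the swap of the two factors of $F^*\otimes F^*$ --- the trace summand $\OO\cdot\id$ corresponds to $\wedge^2F^*$, i.e.\ the \emph{skew} part, while the traceless summand is the symmetric one; the matrix comes out skew only after combining this with the homological sign for $k\equiv 2 \pmod 4$ in Lemma \ref{lemma 0123}. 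Your final conclusion is the right one, but as written the skew-symmetrization is asserted rather than proved.
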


The main idea in the proof of this result is the following.
First, look at the element
$\beta \in \Ext^2(E(\frac{r}{4}-1),E(-\frac{r}{4}-2))$ as a morphism in
$\D^b(\PP^3)$, the derived category of $\PP^3$, so $\beta :
E(\frac{r}{4}-1) \to E(-\frac{r}{4}-2)[2]$.
Second, use
Beilinson Theorem to show that the cone of the morphism $\beta$ is a
$2$-term complex of the form
$\p:\OO_{\PP^3}(-2)^{r+2} \to \OO_{\PP^3}(-1)^{r+2}$, having
$E(-\frac{r}{4}-2)$ as kernel and 
$E(\frac{r}{4}-1)$ as cokernel.
Third, prove that the differential $\p$ is
skew-symmetric, so $\p$ is nothing but the matrix $A$ we are looking
for. 

The advantage of working with derived categories consists in allowing us 
to deal more comfortably with complexes, which is precisely what we need, since $A$ appears as differential of a $2$-term complex.
As a by-product, derived categories allow cleaner and more conceptual statements,
avoiding cumbersome tracking of indexes in spectral sequences.

\subsubsection{A quick tour of derived categories I. General features}

For the reader's convenience, we give a quick account of the language of derived categories,
trying to avoid excessive technicalities.
For more precise definitions and results, we refer to the excellent book \cite{libro_Huybrechts}, in particular Chapters 1-3 and 8.
We use some additional
remarks from \cite{Gelfand_Manin} and \cite{maclane}.
For a short but enlightening introduction to derived categories, see also \cite{caldararu:skimming}.

\vskip.1in

We give some elements of the definition of the derived
category $\D^b(X)$ of bounded complexes of coherent sheaves 
over a smooth projective variety $X$, defined over $\C$.
As an auxiliary tool, we also describe quickly the homotopical category $\K(X)$.

The first ingredient of a category are objects. For this, denote by
$\Kom(X)$ the abelian, $\C$-linear category of complexes of coherent
sheaves on $X$.
The objects in $\D(X)$ and $\K(X)$ are the same as those in $\Kom(X)$, i.e., complexes $\FF$ of the following form:
\[
\mbox{$\FF:\:\:\:\:\:\: \ldots \ra F_{i-1} \xrightarrow{\p_{i-1}} F_i \xrightarrow{\p_i} F_{i+1} \ra \ldots$}
\]
where the $F_i$'s are coherent sheaves on $X$, and $\p_{i+1} \circ
\p_i=0$ for all $i \in \Z$.
We say that $\FF$ lies in $\D^b(X)$ if $\FF$ is bounded in both directions.
We have a {\it shift functor} $\FF \mapsto \FF[1]$, given by shifting
degrees of one place to the left, so
$F[1]_i:=F_{i+1}$.
For the differential of $\FF[1]$, as well as for other sign
conventions, we follow the standard agreement (or at least the same as \cite{libro_Huybrechts}), so $\p_i^{\FF[1]}=-\p^\FF_{i+1}$.

Let us now turn to morphisms in $\K(X)$.
One starts with morphisms in $\Kom(X)$, i.e. \emph{chain maps} $f: \FF
\ra \GG$, that is, collections of maps $f_i:F_i \ra G_i$ such that the
obvious squares commute.
Then, one considers \emph{homotopically equivalent} morphisms
$f,g:\FF \ra \GG$, i.e. such that there exists a collection of morphisms $h_i:F_i \ra G_{i-1}$, for all $i$, such that $f_i-g_i=h_{i+1} \circ \p^\FF_i + \p^\GG_{i-1} \circ h_i$. 
Denote by $\K(X)$ the \emph{homotopy category} of $X$: morphisms in $\K(X)$ are chain maps of complexes, modulo homotopy equivalence. 

An important feature of $\K(X)$ is that it has the structure of a triangulated category.
This means that there is a collection, modeled on exact sequences, of {\it distinguished
  triangles}, i.e. triples of complexes and morphisms as in the diagram:
\begin{equation}
  \label{triangolozzo}
\FF  \xrightarrow{f} \GG \xrightarrow{g} \EE \xrightarrow{h} \FF[1],
\end{equation}
satisfying certain axioms (see \cite[Chapter 1]{libro_Huybrechts}).
A triangle is distinguished in $\K(X)$ if and
only if it is
isomorphic in $\K(X)$ to the cone triangle associated with $f:\FF \ra
\GG$, where the complex $\EE=\cC(f)$ is defined by: 
\begin{center}
  $E_i:=F_{i+1} \oplus G_i$, with differential
$\p_i^{\EE}:=\begin{bmatrix}
  -\p^\FF_{i+1} & 0\\
  f_{i+1} & \p^\GG_i
  \end{bmatrix}$,
\end{center}
and with the obvious maps $g: \GG \ra \EE$ and $h: \EE \ra \FF[1]$. 
Given a complex $\FF$, its \emph{cohomology
sheaves}
$\HHH^i(\FF)$ are defined as:
\[\mbox{$\HHH^i(\FF):=\Ker (\p^i) /\im (\p^{i-1}$}).\] 
Any distinguished triangle \eqref{triangolozzo} gives a long
cohomology sequence:
\[
\cdots \ra \HHH^i(\FF) \ra \HHH^i(\EE) \ra \HHH^i(\GG) \ra
\HHH^{i+1}(\FF) \ra \cdots.
\]
A chain map $f: \FF \ra \GG$ induces maps $\HHH^i(f):\HHH^i(\FF) \ra \HHH^i(\GG)$,
and $f$ is called a \emph{quasi-isomorphism} if, for all $i$, the map
$\HHH^i(f)$ is an isomorphism. 

We are now ready to introduce morphisms $\FF \to \GG$ in the derived category
$\D(X)$.
These are equivalence classes of diagrams of the form:
\begin{equation}\label{roof}
  \xymatrix@-2ex{   &\EE\ar[dl]_f\ar[dr]^g&\\
            \FF \ar@{..>}[rr]&                 &\GG}
\end{equation}
where both arrows $f$ and $g$ represent morphisms in $\K(X)$ and
$f:\EE \ra \FF$ is a quasi-isomorphism. Such diagrams are called \emph{roofs}.
In other words, we formally invert quasi-isomorphisms, so a roof $\FF
\to \GG$ as above can be thought of as $g/f$, with a convenient formalism.
This process is called \emph{localisation}, by analogy with the process of localisation of
rings along a multiplicative system.
The category $\D(X)$ inherits from $\K(X)$ the structure of a
triangulated category; in particular the notions of shift, cone of a morphism, and distinguished triangle are well-defined.
Note that the cone of a morphism in $\D(X)$ is defined up to an
isomorphism which is not unique in general.

Coherent sheaves on $X$ are elements of $\D(X)$, concentrated in a
single degree. We will usually take this degree to be zero, following the standard convention. 
Morphisms of coherent sheaves can be seen as complexes
whose cohomology is concentrated in two consecutive degrees.
Indeed, given a complex $\FF$ having cohomology in degrees $-2$ and
$-1$ only, we can replace $\FF$ with:
\[
\FF': \qquad \cdots \to F_{-3} \to F_{-2} \to \Ker (\partial_{-1})
\]
where the map $F_{-2} \to \Ker (\partial_{-1})$ is induced by the composition $F_{-2} \to \im(\partial_{-2}) \to \Ker( \partial_{-1})$. 
The induced chain map $\FF' \to \FF$ is a quasi-isomorphism. Then, one replaces $\FF'$ with:
\[
\FF'': \qquad \coker(\partial_{-3}) \to \Ker(\partial_{-1}),
\]
where the differential is the composition of the surjection $\coker(\partial_{-3})
\to \im(\partial_{-2})$ with the injection
$\im(\partial_{-2}) \to \Ker(\partial_{-1})$.
This time, we
obtain a chain map $\FF'' \to \FF'$, which is again a quasi-isomorphism.
Altogether, we get a roof $\FF \leftarrow \FF' \to \FF''$, so $\FF$ is quasi-isomorphic to a complex
with two terms only.

Having introduced derived categories, let us now briefly introduce derived functors.
In the derived category, a coherent sheaf $\FF$ is the same thing as any
resolution of $\FF$. (We think essentially of injective resolutions,
with a slight abuse of terminology since we have to rely on
quasi-coherent sheaves as well to perform this.) Then, taking global sections of (an injective resolution of) $\FF$ results in a complex whose
$i$-th cohomology is $\HH^i(\FF)$.
Likewise, an object $\FF$ in $\D(X)$ is equivalent to the total
complex attached to a resolution of each of the $F_i$. By taking global sections of this total complex, we get a complex of
vector spaces: the $i$-th cohomology of this complex is called the
\emph{hypercohomology} $\HH^i(\FF)$ of $\FF$. 
Given an exact triangle \eqref{triangolozzo}, we have the
hypercohomology long exact sequence (see \cite[Section 2.2]{libro_Huybrechts}):
\[
\cdots \ra \HH^i(\FF) \ra \HH^i(\EE) \ra \HH^i(\GG) \ra
\HH^{i+1}(\FF) \ra \cdots
\]
Moreover, given complexes $\FF$ and $\GG$ in $\D^b(X)$, we will have
to consider the groups $\Ext^i(\FF,\GG)$ in the category $\Kom(X)$,
namely the $i$-th cohomology of the total complex obtained by applying
$\Hom(-,\GG)$ to (an injective resolution of) $\FF$.
It turns out that this amounts to compute
morphisms of shifted complexes in the derived category (see \cite[Rmk 2.57]{libro_Huybrechts}):
\begin{equation}\label{ext=hom}
  \Ext^i(\FF,\GG) \simeq \Hom_{\D^b(X)}(\FF,\GG[i]).
\end{equation}
Also in this case, we get a long cohomology exact sequence by applying
$\Hom_{\D^b(X)}(-,\GG)$ to a distinguished triangle.
The same considerations apply to other classical functors, such
as tensor product, local cohomology, higher direct images, and so forth.

\subsubsection{A quick tour of derived categories II. Beilinson theorem}

We now focus our attention on $\D^b(\PP^n)$, the bounded derived
category of the projective space. Its main feature is Beilinson
Theorem, which states that 
$\D^b(\PP^n)$ is generated by the exceptional collection $\langle
\OO_{\PP^n}(-n),\OO_{\PP^n}(-n+1),\ldots,\OO_{\PP^n}(-1),\OO_{\PP^n}
\rangle$, with dual collection $\langle
\OO_{\PP^n}(-1),\Omega^{n-1}_{\PP^n}(n-1), \ldots,
\Omega^1_{\PP^n}(1), \OO_{\PP^n} \rangle$, cf.
\cite[Coroll. 8.29]{libro_Huybrechts}.
We need the following version:

\begin{prop}[Beilinson Theorem]\label{beilinson}
Let $\FF$ be a bounded complex of coherent sheaves on $\PP^n$.
Then there exists a complex $\LL$, whose factors are  
$L_k:=\bigoplus_{s-j=k}\HH^s(\FF \otimes \Omega_{\PP^n}^{j}(j)) \otimes \OO_{\PP^n}(-j)$, which is quasi-isomorphic to $\FF$.
\end{prop}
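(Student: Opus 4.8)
The plan is to obtain $\LL$ from Beilinson's resolution of the diagonal combined with the fact that the identity functor on $\D^b(\PP^n)$ is the Fourier--Mukai transform whose kernel is the structure sheaf of the diagonal. Set $X = \PP^n \times \PP^n$ with the two projections $p_1, p_2 : X \to \PP^n$, write $\boxtimes$ for $p_1^*(-) \otimes p_2^*(-)$, and let $i_\Delta : \PP^n \hookrightarrow X$ be the diagonal embedding, with image $\Delta$. First I would recall that $\Delta$ is the zero locus of a regular section of the rank-$n$ bundle $\T_{\PP^n}(-1) \boxtimes \OO_{\PP^n}(1)$, so that the associated Koszul complex (whose $j$-th exterior power is $\Omega^j_{\PP^n}(j) \boxtimes \OO_{\PP^n}(-j)$) provides the locally free resolution
\[
0 \to \Omega^n_{\PP^n}(n) \boxtimes \OO_{\PP^n}(-n) \to \cdots \to \Omega^1_{\PP^n}(1) \boxtimes \OO_{\PP^n}(-1) \to \OO_X \to \OO_\Delta \to 0,
\]
in which the factor $\Omega^j_{\PP^n}(j) \boxtimes \OO_{\PP^n}(-j)$ sits in cohomological degree $-j$ and $\OO_X$ in degree $0$.

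The second step is to use the identity $\FF \simeq Rp_{2*}\bigl(p_1^*\FF \otimes^{L} \OO_\Delta\bigr)$, valid for every $\FF \in \D^b(\PP^n)$ because $\OO_\Delta = i_{\Delta *}\OO_{\PP^n}$ and $p_2 \circ i_\Delta = \id$, so that the projection formula collapses the transform to the identity. Substituting the resolution above for $\OO_\Delta$—legitimate since its terms are locally free, so the derived tensor product is computed naively—exhibits $\FF$ as quasi-isomorphic to $Rp_{2*}$ applied to $p_1^*\FF$ tensored with the (truncated) Koszul complex. Computing $Rp_{2*}$ of the degree $-j$ factor by the projection formula and flat base change along $p_2$ (Künneth), I get
\[
Rp_{2*}\bigl(p_1^*(\FF \otimes \Omega^j_{\PP^n}(j)) \otimes p_2^*\OO_{\PP^n}(-j)\bigr) \simeq R\Gamma(\PP^n, \FF \otimes \Omega^j_{\PP^n}(j)) \otimes \OO_{\PP^n}(-j),
\]
a bounded complex of sheaves whose $s$-th cohomology is $\HH^s(\FF \otimes \Omega^j_{\PP^n}(j)) \otimes \OO_{\PP^n}(-j)$. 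Thus $\FF$ is quasi-isomorphic to the total complex of the double complex whose $(-j)$-th column is $R\Gamma(\FF \otimes \Omega^j_{\PP^n}(j)) \otimes \OO_{\PP^n}(-j)$ and whose horizontal differentials are induced by the resolution of the diagonal.

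Finally I would assemble $\LL$ by collapsing each column onto its cohomology. Over the field $\C$ every bounded complex of vector spaces is homotopy equivalent to its cohomology with zero differential; tensoring with the line bundle $\OO_{\PP^n}(-j)$, each column decomposes as $\bigoplus_s \HH^s(\FF \otimes \Omega^j_{\PP^n}(j)) \otimes \OO_{\PP^n}(-j)[-s]$ plus a contractible complex of copies of $\OO_{\PP^n}(-j)$. Transporting the horizontal differentials across these homotopy equivalences and discarding the contractible summands—by the homological perturbation lemma, equivalently iterated Gaussian elimination, which is harmless because we work over a field and all complexes in sight are bounded—replaces the total complex by a homotopy equivalent one whose degree $k$ term is exactly $L_k = \bigoplus_{s-j=k} \HH^s(\FF \otimes \Omega^j_{\PP^n}(j)) \otimes \OO_{\PP^n}(-j)$. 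The resulting $\LL$ is then quasi-isomorphic to $\FF$.

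The step I expect to be the main obstacle is precisely this last passage from the double complex (equivalently, from the Beilinson spectral sequence whose $E_1$-page consists of the $L_k$) to an \emph{honest} complex $\LL$ having the $L_k$ as its factors: it requires transporting the residual differential compatibly across the column-wise splittings, rather than merely reading off $E_1$. The formality of bounded complexes of $\C$-vector spaces is what makes this possible, but it is the point that must be argued with care; by contrast, the resolution of the diagonal and the Fourier--Mukai description of the identity are standard and may be cited from \cite[Ch. 8]{libro_Huybrechts}.
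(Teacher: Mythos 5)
Your proof is correct and follows essentially the same route as the argument the paper relies on: the paper gives no proof of its own, citing instead Beilinson's resolution of the diagonal and the Fourier--Mukai argument of \cite[Prop. 8.28]{libro_Huybrechts} with the remark that the proof ``goes through verbatim'' for complexes, and your reconstruction is precisely that standard argument. You also correctly isolate the one step where the extension from sheaves to complexes needs genuine care---collapsing each column $R\Gamma(\FF \otimes \Omega^j_{\PP^n}(j)) \otimes \OO_{\PP^n}(-j)$ onto its hypercohomology and transporting the differential by perturbation/Gaussian elimination, which is legitimate because bounded complexes of vector spaces over a field split---so nothing is missing.
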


We call $\LL$ the {\it decomposition of $\FF$}, and, for fixed $j$, we call the terms
$\HH^s(\FF \otimes \Omega_{\PP^n}^{j}(j)) \otimes \OO_{\PP^n}(-j)$ the {\it components} of
$\FF$ along $\OO_{\PP^n}(-j)$.
The theorem of Beilinson was proved in \cite{beilinson}, see also
 \cite[Prop. 8.28]{libro_Huybrechts}, \cite{caldararu:skimming}.
Although our statement is slightly more general than in
\cite{libro_Huybrechts}, since we take into account complexes and not
just sheaves, the proof
goes through verbatim.
It will be useful to have a graphic description of Beilinson
Theorem. Consider the $(n+1) \times (n+1)$ square diagram, sometimes referred
to as the {\it Beilinson table} of $\FF$:
\begin{equation}\label{quadrato2}
\begin{array}{|c||c|c|c|c|c|c|}
  \hline
    \hh^n(\FF \otimes \Omega_{\PP^n}^{j}(j))  &\hh^n(\FF(-1))&\cdots &\hh^n(\FF \otimes \Omega_{\PP^n}^1(1))&\hh^n(\FF)\\
      \hline
    \hh^{n-1}(\FF \otimes \Omega_{\PP^n}^{j}(j))  &\hh^{n-1}(\FF(-1))&\cdots &\hh^{n-1}(\FF \otimes \Omega_{\PP^n}^1(1))&\hh^{n-1}(\FF)\\
      \hline
      \vdots &\vdots & & \vdots & \vdots \\
      \hline
    \hh^{1}(\FF \otimes \Omega_{\PP^n}^{j}(j))  &\hh^{1}(\FF(-1))&\cdots &\hh^{1}(\FF \otimes \Omega_{\PP^n}^1(1))&\hh^{1}(\FF)\\
    \hline
    \hh^0(\FF \otimes \Omega_{\PP^n}^{j}(j)) &\hh^0(\FF(-1))&\cdots&\hh^0(\FF \otimes \Omega_{\PP^n}^1(1))&\hh^0(\FF)\\
     \hline
     \hline
     & j=n & & j=1 & j = 0\\
     \hline
    \end{array}
  \end{equation}

The terms $L_k$ of the complex $\LL$ can be computed by taking the
direct sum of all the terms on the ``NW-SE'' diagonals, the main
diagonal corresponding to $k=0$, the first subdiagonal to $k=-1$, and
so on. 
For $j=0,\ldots,n$, each sheaf $\OO_{\PP^n}(-j)$ must be taken with
the multiplicity given by the corresponding integer in the Beilinson
table, which is exactly $\hh^s(\FF \otimes \Omega_{\PP^n}^{j}(j))$.

\subsubsection{Proof of the main theorem I. Getting the matrix}

  We need to show that conditions $(\star)$ and $(\star\star)$ are
  sufficient in order to get a matrix
  $\OO_{\PP^3}(-2)^{r+2} \to \OO_{\PP^3}(-1)^{r+2}$, whose kernel is $E(-\frac r4-2)$ and whose cokernel is 
  $E(\frac r4-1)$. In this step we are not interested in the skew-symmetry of the map: we deal with it in the next subsection.
  The proof is divided into two claims.

  The distinguished element $\beta \in
  \Ext^2(E(\frac{r}{4}-1),E(-\frac{r}{4}-2))$ corresponds to a
  $2$-term extension:
  \begin{equation}\label{2-term ext}
  \mbox{
    $0 \to  E(-\frac{r}{4}-2) \to   P_2 \xrightarrow{\p}  P_1 \to
  E(\frac{r}{4}-1) \to  0$}.
  \end{equation}

 Via the isomorphism \eqref{ext=hom}, $\beta$ can be seen as an element of
  $\Hom_{\D^b(\PP^3)}(E(\frac r4-1),E(-\frac r4-2)[2])$.
  Let $\cC:=\cC(\beta)$ be the cone of this morphism, so $\cC$ lies in the exact triangle:
  \begin{equation}\label{cono gen}
    \mbox{$E(\frac{r}{4}-1)  \to E(-\frac{r}{4}-2) [2] \to \cC \to E(\frac{r}{4}-1)[1]$}.
  \end{equation}
By taking the cohomology sequence induced by the exact triangle
\eqref{cono gen}, we see that $\cC$ has cohomology $E(-\frac r4-2)$ in
degree $-2$ and $E(\frac r4-1)$ in degree $-1$.
We have seen that $\cC$ is thus a $2$-term complex, non-zero in degree $-2$ and $-1$
only. In fact the triangle is nothing but \eqref{2-term ext}, and we have $C_{-2} = P_2$, $C_{-1}=P_1$, and $\p$ as differential.
 
We apply Beilinson Theorem \ref{beilinson} to the complex $\cC= P_2
\xrightarrow{\p} P_1$, decomposing it with respect to the
collection $\langle \OO_{\PP^3}(-3), \OO_{\PP^3}(-2), \OO_{\PP^3}(-1),
\OO_{\PP^3} \rangle$. Recall that the components of $\cC$ along the
term $\OO_{\PP^3}(-j)$ are computed by $\hh^s(\cC \otimes
\Omega^{j}(j))$. 

\vspace{0,2cm}

  \begin{claim} \label{claim1}
    The terms $\OO_{\PP^3}$ and $\OO_{\PP^3}(-3)$ do not occur in the decomposition of $\cC$.
  \end{claim}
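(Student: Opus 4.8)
The plan is to reduce Claim~\ref{claim1} to a pair of hypercohomology vanishings and then to read each off from condition $(\star)$. Since the term $\OO_{\PP^3}(-j)$ enters the decomposition of $\cC$ with total multiplicity $\sum_s \hh^s(\cC \otimes \Omega^j_{\PP^3}(j))$, and since $\Omega^0_{\PP^3}(0) = \OO_{\PP^3}$ while $\Omega^3_{\PP^3}(3) \simeq \OO_{\PP^3}(-1)$, excluding the two extremal terms $\OO_{\PP^3}$ and $\OO_{\PP^3}(-3)$ is exactly the assertion that $\HH^s(\cC) = 0$ and $\HH^s(\cC(-1)) = 0$ for every $s$. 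I would obtain both by feeding the triangle \eqref{cono gen} (respectively its twist by $\OO_{\PP^3}(-1)$) into the hypercohomology long exact sequence, whose connecting maps are precisely the $\mu^p_t$: one gets $t=0$ for $\cC$ and $t=-1$ for $\cC(-1)$. In each degree this yields a short exact sequence
\[
0 \to \coker(\mu^s_t) \to \HH^s(\cC) \to \Ker(\mu^{s+1}_t) \to 0
\]
(and likewise with $\cC(-1)$ for $t=-1$), so the vanishing of $\HH^s$ is equivalent to surjectivity of $\mu^s_t$ and injectivity of $\mu^{s+1}_t$ in the relevant degrees.

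For the term $\OO_{\PP^3}$ (the case $t=0$), condition $(\star)$ is tailored to the job: $\mu^0_0$ and $\mu^1_0$ are isomorphisms, while $\HH^2(E(\frac r4-1)) = \HH^3(E(\frac r4-1)) = 0$ and $\HH^0(E(-\frac r4-2)) = \HH^1(E(-\frac r4-2)) = 0$. Substituting these into the long exact sequence kills $\HH^s(\cC)$ in every degree, so $\OO_{\PP^3}$ does not occur. Note that this part uses only $(\star)$, not $(\star\star)$.

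The term $\OO_{\PP^3}(-3)$ (the case $t=-1$) is the genuine obstacle, since the maps $\mu^p_{-1}$ are not among those controlled by $(\star)$. The key device is Serre duality on $\PP^3$: using $E^\vee \simeq E$ (because $\rk E = 2$ and $c_1(E)=0$) and $\omega_{\PP^3} \simeq \OO_{\PP^3}(-4)$, one checks that cup product with the fixed class $\beta$ is self-adjoint, i.e. that $\mu^p_t$ is Serre-dual to $\mu^{1-p}_{-1-t}$; this is the compatibility of the Yoneda product in $\beta$ with the Serre pairing into $\HH^3(\omega_{\PP^3}) \simeq \C$. Specialising to $t=-1$, the maps $\mu^0_{-1}$ and $\mu^1_{-1}$ become the Serre duals of $\mu^1_0$ and $\mu^0_0$, both isomorphisms by $(\star)$; the same duality turns the four vanishings of $(\star)$ into $\HH^2(E(\frac r4-2)) = \HH^3(E(\frac r4-2)) = 0$ and $\HH^0(E(-\frac r4-3)) = \HH^1(E(-\frac r4-3)) = 0$. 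The long exact sequence then forces $\HH^s(\cC(-1)) = 0$ for all $s$, so $\OO_{\PP^3}(-3)$ is absent as well. The one step demanding care is the verification of the adjunction $(\mu^p_t)^\vee = \pm\,\mu^{1-p}_{-1-t}$, where keeping track of the twists (and the sign) is the most delicate bookkeeping.
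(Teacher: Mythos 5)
Correct, and essentially the paper's own argument: you reduce the two exclusions to $\HH^\bullet(\cC)=0$ and $\HH^\bullet(\cC(-1))=0$, obtain the first from the hypercohomology sequence of the triangle \eqref{cono gen} together with $(\star)$, and handle the twist by Serre duality, exactly as in the paper's proof. If anything, you are more explicit at the one point the paper compresses into ``the same argument as above applies'', namely the self-adjointness $(\mu^p_t)^\vee=\pm\,\mu^{1-p}_{-1-t}$; this does hold here because $\wedge^2E\simeq\OO_{\PP^3}$ forces every class in $\Ext^2(E(\frac r4-1),E(-\frac r4-2))$ to be symmetric (as $\HH^2(\OO_{\PP^3}(-\frac r2-1))=0$), a fact the paper records only later, in the proof of Claim \ref{claim3}.
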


  \begin{proof}[Proof of Claim \ref{claim1}]
    We have to show that the components of $\cC$ along $\OO_{\PP^3}$
    and $\OO_{\PP^3}(-3)$ are zero. We show that this is a direct
    consequence of $(\star)$.

   To check the statement regarding $\OO_{\PP^3}$, we need $\HH^i(\cC)=0$ for all $i$. 
   For this, take hypercohomology of (\ref{cono gen}). The vanishing
   $\HH^q(E(\frac{r}{4}-1))=\HH^{q-2}(E(-\frac{r}{4}-2))=0$ for
   $q=2,3$ tells us that  
  the only groups $\HH^i(\cC)$ that are not trivially zero fit in the exact sequence:
  $$\xymatrix@C-2.5ex@R-4ex{0 \ar[r]& \HH^{-1}(\cC) \ar[r]& \HH^0(E(\frac{r}{4}-1)) \ar[r]& \HH^2(E(-\frac{r}{4}-2)) \ar[r]& \HH^{0}(\cC)  \ar[r]&\\
    &\ar[r]&\HH^1(E(\frac{r}{4}-1)) \ar[r]& \HH^3(E(-\frac{r}{4}-2)) \ar[r]& \HH^1(\cC) \ar[r]& 0.}$$ 
  From the isomorphisms $\HH^p(E(\frac{r}{4}-1))\simeq
  \HH^{p+2}(E(-\frac{r}{4}-2))$, $p=0,1$, entailed by $(\star)$, we deduce
  the desired vanishing.

  To check that $\OO_{\PP^3}(-3)$ does not occur, we need $\HH^i(\cC(-1))=0$ for all $i$.
  By Serre duality 
  $\HH^i(E(\frac{r}{4}-2)) \simeq \HH^{3-i}(E(-\frac{r}{4}-2))^*$ and $\HH^i(E(-\frac{r}{4}-3)) \simeq \HH^{3-i}(E(\frac{r}{4}-1))^*$, 
  thus the same argument as above applies, and Claim \ref{claim1} is proved.
  \end{proof}

\vspace{0,2cm}

  Now we show that in the decomposition of $\cC$ the terms that we have not yet considered appear concentrated in one degree. 
  After Claim \ref{claim1}, the Beilinson table \eqref{quadrato2} of $\cC$ looks like this:

  \begin{equation}\label{table}
    \begin{array}{|r||c|c|c|c|}
      \hline
      \hh^3(\cC \otimes \Omega_{\PP^3}^{j}(j))&0&\lozenge&\lozenge&0\\
      \hline
      \hh^2(\cC \otimes \Omega_{\PP^3}^{j}(j))&0&\lozenge&\lozenge&0\\
      \hline
      \hh^1(\cC \otimes \Omega_{\PP^3}^{j}(j))&0&\lozenge&\lozenge&0\\
      \hline
      \hh^0(\cC \otimes \Omega_{\PP^3}^{j}(j))&0&\blacklozenge&\blacklozenge&0\\
      \hline\hline
      &j=3&j=2&j=1&j=0\\
      \hline
    \end{array}
  \end{equation}

\begin{claim} \label{claim2}
  In the Beilinson table (\ref{table}), we have $\lozenge=0$ and $\blacklozenge=r+2$
\end{claim}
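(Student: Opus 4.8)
The plan is to compute the surviving entries of the Beilinson table \eqref{table}, namely the numbers $\hh^s(\cC\otimes\Omega^j_{\PP^3}(j))$ for $j=1,2$ and $s=0,1,2,3$, by reducing them to the hypercohomology of suitable twists $\cC(t)$. The key mechanism is to tensor $\cC$ with (exterior powers of) the Euler sequence on $\PP^3$: since Claim \ref{claim1} already gives $\HH^\bullet(\cC)=0$ and $\HH^\bullet(\cC(-1))=0$, two of the three terms of each such sequence drop out and the computation collapses to a single twist. Throughout I will use the triangle \eqref{cono gen}: tensoring it by $\OO_{\PP^3}(t)$ and taking hypercohomology gives, for every $i$, a short exact sequence
\[
0 \to \coker(\mu^i_t) \to \HH^i(\cC(t)) \to \ker(\mu^{i+1}_t) \to 0,
\]
where $\mu^i_t$ is the map induced by $\beta(t)$ (cup product with $\beta$). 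In this way the hypotheses $(\star)$ and $(\star\star)$ translate directly into vanishing and isomorphism statements for the $\HH^i(\cC(t))$.

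For the column $j=1$ I will use the Euler sequence $0\to\Omega^1_{\PP^3}(1)\to\OO_{\PP^3}^4\to\OO_{\PP^3}(1)\to0$. Tensoring with $\cC$ and invoking $\HH^\bullet(\cC)=0$ from Claim \ref{claim1} yields $\HH^s(\cC\otimes\Omega^1_{\PP^3}(1))\simeq\HH^{s-1}(\cC(1))$. Now $(\star\star)$ tells us that $\mu^0_1$ is surjective, that $\mu^1_1$ is an isomorphism, and that $\HH^2(E(\frac r4))=\HH^3(E(\frac r4))=0$; feeding this into the displayed sequence at $t=1$ gives $\HH^0(\cC(1))=\HH^1(\cC(1))=\HH^2(\cC(1))=0$. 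Hence the entries with $s=1,2,3$ in column $j=1$ vanish, i.e. $\lozenge=0$ there.

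The column $j=2$ is the main difficulty. A naive reduction through $\Omega^1_{\PP^3}(2)$ would require information at the twist $t=2$, which lies outside both $(\star)$ and $(\star\star)$. I will bypass this by using instead $\Omega^2_{\PP^3}(2)\simeq\TT_{\PP^3}(-2)$ together with the twisted Euler sequence $0\to\OO_{\PP^3}(-2)\to\OO_{\PP^3}(-1)^4\to\Omega^2_{\PP^3}(2)\to0$. Tensoring with $\cC$ and invoking the second vanishing $\HH^\bullet(\cC(-1))=0$ from Claim \ref{claim1} gives $\HH^s(\cC\otimes\Omega^2_{\PP^3}(2))\simeq\HH^{s+1}(\cC(-2))$, so it suffices to prove $\HH^i(\cC(-2))=0$ for $i\ge 2$. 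Here Serre duality turns the negative twist back into the range controlled by $(\star\star)$: it gives $\HH^3(E(\frac r4-3))\simeq\HH^0(E(-\frac r4-1))^*$, which vanishes by $(\star\star)$, while $\HH^4$ of any sheaf on $\PP^3$ is zero. The displayed sequence at $t=-2$ then forces $\HH^2(\cC(-2))=\HH^3(\cC(-2))=\HH^4(\cC(-2))=0$, so $\lozenge=0$ in column $j=2$ as well. I expect this routing through $\cC(-1)$ and $\cC(-2)$, rather than through $\cC(1)$ and $\cC(2)$, to be the one genuinely nonobvious step.

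It remains to identify $\blacklozenge=r+2$. Once $\lozenge=0$ in both columns, Beilinson's Theorem \ref{beilinson} shows that $\cC$ is quasi-isomorphic to a two-term complex $\OO_{\PP^3}(-2)^{N_2}\to\OO_{\PP^3}(-1)^{N_1}$ concentrated in degrees $-2$ and $-1$, with $N_j=\hh^0(\cC\otimes\Omega^j_{\PP^3}(j))$ the two $\blacklozenge$ entries. Its cohomology sheaves are those of $\cC$, that is $E(-\frac r4-2)$ in degree $-2$ and $E(\frac r4-1)$ in degree $-1$, so we obtain the four-term exact sequence $0\to E(-\frac r4-2)\to\OO_{\PP^3}(-2)^{N_2}\to\OO_{\PP^3}(-1)^{N_1}\to E(\frac r4-1)\to0$. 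Comparing ranks gives $N_1=N_2$, and comparing first Chern classes (using $c_1(E)=0$, so $c_1(E(a))=2a$) gives $-r-2+2N_2-N_1=0$, whence $N_1=N_2=r+2$. This identifies both $\blacklozenge$ entries with $r+2$ and completes the proof of the claim; note that this last step extracts the value without ever having to evaluate the individual cohomology dimensions appearing in the table.
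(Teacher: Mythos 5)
Your computation of the $\lozenge$ entries is correct, and it follows the paper's own route: the Euler sequence and its twisted second exterior power reduce column $j=1$ to $\HH^{\bullet}(\cC(1))$ and column $j=2$ to $\HH^{\bullet}(\cC(-2))$, and your exact sequences $0 \to \coker(\mu^i_t) \to \HH^i(\cC(t)) \to \ker(\mu^{i+1}_t) \to 0$ are a correct repackaging of the hypercohomology long exact sequence of \eqref{cono gen}. The gap is in the last step, where you claim that $\lozenge=0$ (together with Claim \ref{claim1}) already forces the Beilinson decomposition of $\cC$ to be the two-term complex $\OO_{\PP^3}(-2)^{N_2} \to \OO_{\PP^3}(-1)^{N_1}$. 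Since $\cC$ is a complex with cohomology sheaves in degrees $-2$ and $-1$, not a sheaf, the groups $\HH^s(\cC \otimes \Omega^j_{\PP^3}(j))$ can a priori be nonzero also for $s=-1,-2$; these are Beilinson summands sitting in degrees $s-j$, they are not displayed in table \eqref{table}, and they are not among your $\lozenge$'s. For the quasi-isomorphism you assert, you also need $\HH^{-2}(\cC(1))=0$, $\HH^{0}(\cC(-2))=0$ and $\HH^{-1}(\cC(-2))=0$; otherwise the Beilinson complex acquires extra summands (an extra power of $\OO_{\PP^3}(-1)$ in degree $-2$, and powers of $\OO_{\PP^3}(-2)$ in degrees $-3$ and $-4$), and your rank and first-Chern-class count no longer applies.

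The first of these missing vanishings is harmless: $\HH^{-2}(\cC(1))=0$ follows from $\HH^0(E(-\frac r4 -1))=0$, which is part of $(\star\star)$. But the other two are exactly what your ``routing through $\cC(-2)$'' was designed to avoid: by your own exact sequence at $t=-2$, the vanishing of $\HH^0(\cC(-2))$ and $\HH^{-1}(\cC(-2))$ requires $\mu^0_{-2}$ to be an isomorphism and $\mu^1_{-2}$ to be injective (plus $\HH^1(E(-\frac r4-4))=0$), and $(\star\star)$ yields this only through the compatibility of the cup-product maps with Serre duality, namely that $\mu^0_{-2}$ and $\mu^1_{-2}$ are the Serre transposes of $\mu^1_1$ and $\mu^0_1$. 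This is precisely the full-strength statement the paper establishes ($\HH^i(\cC(1))=0$ for all $i\neq -1$ and $\HH^i(\cC(-2))=0$ for all $i \neq 1$), after which it extracts the value $r+2$ by Riemann--Roch and Serre duality. Once those vanishings are supplied, your identification of $\blacklozenge$ by comparing ranks and first Chern classes in the four-term exact sequence is a nice alternative to the paper's Euler-characteristic computation; but as written the proof of $\blacklozenge = r+2$ is incomplete.
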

  \begin{proof}[Proof of Claim \ref{claim2}]
    We show that this is a consequence of $(\star\star)$. Let us start with the term $\OO_{\PP^3}(-1)$. 
    We need $\hh^i(\cC \otimes \Omega_{\PP^3}^1(1))=0$ for all $i \neq 0$ and $\hh^0(\cC \otimes \Omega_{\PP^3}^1(1))=r+2$.
    Taking the Euler sequence tensored by $\cC$, since all terms of
    the Euler sequence are vector bundles, we obtain a
    distinguished triangle:
    \[
    \cC \otimes \Omega_{\PP^3}^1(1) \to  \cC^4 \to 
    \cC(1) \to \cC \otimes \Omega_{\PP^3}^1(1)[1],
    \]
    and we compute its hypercohomology. The vanishing $\HH^i(\cC)=0$ for all $i$ that we proved in Claim \ref{claim1} 
    implies that $\HH^i(\cC \otimes \Omega_{\PP^3}^1(1)) \simeq \HH^{i-1}(\cC(1))$, hence 
    what we want is $\HH^i(\cC(1))=0$ for all $i \neq -1$ and $\hh^{-1}(\cC(1))=r+2$.
    So let us compute hypercohomology of (\ref{cono gen}) twisted by $\OO_{\PP^3}(1)$:
    $$
    \mbox{$E( \frac{r}{4})  \to E( -\frac{r}{4}-1 ) [2] \to \cC(1) \to E( \frac{r}{4})[1].$}$$

  Analogously to what happened in the previous case, the vanishing 
  $\HH^2(E(\frac{r}{4}))=\HH^3(E(\frac{r}{4}))=\HH^0(E(-\frac{r}{4}-1))=0$ implies that
  the only groups $\HH^i(\cC(1))$ that are not trivially zero fit in the exact sequence:
  $$\xymatrix@C-2.5ex@R-4ex{0 \ar[r]& \HH^1(E(-\frac{r}{4}-1)) \ar[r]&\HH^{-1}(\cC(1)) \ar[r]& \HH^0(E(\frac{r}{4})) \ar[r]& \HH^2(E(-\frac{r}{4}-1)) \ar[r]& \HH^{0}(\cC(1)) 
    \ar[r]&\\
    &&\ar[r]&\HH^1(E(\frac{r}{4})) \ar[r]& \HH^3(E(-\frac{r}{4}-1)) \ar[r]& \HH^1(\cC(1)) \ar[r]& 0.}$$

  Now the surjection $\HH^0 ( E (\frac{r}{4} )) \epi \HH^2 ( E ( -\frac{r}{4} ))$ and the isomorphism 
  $\HH^1(E(\frac{r}{4})) \simeq \HH^3 ( E (-\frac{r}{4}-1))$ guarantee that $\HH^i(\cC(1))=0$ for $i=0$ and $1$. 
  From Riemann-Roch we get:
  $$\hh^{-1}(\cC(1))=\chi \pga E\pga \frac{r}{4}\pgc\pgc -\chi \pga E\pga -\frac{r}{4}-1\pgc\pgc=\frac{(r+4)(r+8)}{16}-\frac{r(r-4)}{16}=r+2.$$

  Finally we deal with the term $\OO_{\PP^3}(-2)$. We need to show that 
  $\HH^i(\cC \otimes \Omega_{\PP^3}^2(2))=0$ for all $i \neq 0$ and $\hh^0(\cC \otimes \Omega_{\PP^3}^2(2))=r+2$. Notice that $\Omega_{\PP^3}^2(2) \simeq \T_{\PP^3}(-2)$. 
  Then using again the (dual) Euler sequence tensored by $\cC(-1)$:
  \[
  \cC(-2) \to \cC(-1)^4 \to \cC \otimes \T_{\PP^3}(-2) \to \cC(-2)[1],
  \]
  we see that the vanishing $\HH^i(\cC(-1))=0$ for all $i$ proved in Claim \ref{claim1} implies that 
  $\HH^i(\cC \otimes \Omega_{\PP^3}^2(2)) \simeq \HH^i(\cC \otimes \T_{\PP^3}(-2)) \simeq \HH^{i+1}(\cC(-2))$. 
  We are thus left to prove $\HH^i(\cC(-2))=0$ for all $i \neq 1$ and $\hh^1(\cC(-2))=r+2$.

  We take cohomology of (\ref{cono gen}) twisted by $-2$, and notice that by Serre duality 
  $\HH^i(E(\frac{r}{4}-3)) \simeq \HH^{3-i}(E(-\frac{r}{4}-1))^*$. Hence the conditions required by $(\star\star)$ 
  and the same argument as above yield that the only non-vanishing group is $\HH^{1}(\cC(-2))$. 
  Moreover, again by Serre duality, $\hh^1(\cC(-2))=\hh^{-1}(\cC(1))=r+2$, so Claim \ref{claim2} is proved.
  \end{proof}

  \subsubsection{Proof of the main theorem II. Skew-symmetrising the matrix}

  So far, we have shown that we can decompose the cone $\cC$ explicitly as a map $\p:\OO_{\PP^3}(-2)^{r+2} \to \OO_{\PP^3}(-1)^{r+2}$, i.e. 
  the differential $\p$ is a matrix $A$ of size $r+2$ and constant rank $r$,
  that by construction will fit in a $2$-term extension of type
  (\ref{se E twist}).
  What is left to prove is that:
  \begin{claim} \label{claim3}
    The matrix $A$ is skew-symmetrizable.
  \end{claim}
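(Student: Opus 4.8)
The plan is to read off the skew-symmetry from the self-duality of the kernel bundle together with a single cohomology vanishing. Since $E$ has rank $2$ and $c_1(E)=0$, we have $\det E\simeq\OO_{\PP^3}$, and the wedge pairing $E\otimes E\to\wedge^2E\simeq\OO_{\PP^3}$ provides an isomorphism $\phi\colon E\xrightarrow{\sim}E^\vee$ which is \emph{alternating}, i.e. $\transpose{\phi}=-\phi$ under the canonical biduality. Writing $\FF:=E(\frac r4-1)$ and using $\omega_{\PP^3}=\OO_{\PP^3}(-4)$, the twist of $\phi$ identifies the kernel as $E(-\frac r4-2)\simeq\FF^\vee(-3)$. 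The distinguished class therefore lives in
\[
\beta\in\Ext^2\bigl(\FF,\FF^\vee(-3)\bigr)\simeq\HH^2\bigl(\PP^3,\FF^\vee\otimes\FF^\vee(-3)\bigr),
\]
and I decompose $\FF^\vee\otimes\FF^\vee=S^2\FF^\vee\oplus\wedge^2\FF^\vee$, the factor-swap $\tau$ acting as $+1$ on $S^2\FF^\vee$ and as $-1$ on $\wedge^2\FF^\vee$.

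The key computation is that the antisymmetric part carries no cohomology. As $\FF$ has rank $2$, one has $\wedge^2\FF^\vee=(\det\FF)^{-1}=\OO_{\PP^3}(-\frac r2+2)$, hence $\wedge^2\FF^\vee(-3)=\OO_{\PP^3}(-\frac r2-1)$ and $\HH^2(\PP^3,\OO_{\PP^3}(-\frac r2-1))=0$, since the middle cohomology of any line bundle on $\PP^3$ vanishes. Consequently $\beta$ lies entirely in $\HH^2(\PP^3,S^2\FF^\vee(-3))$; equivalently $\tau(\beta)=\beta$.

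Next I would dualise the $2$-extension \eqref{2-term ext}. Applying $\mathbf{R}\mathcal{H}om(-,\OO_{\PP^3}(-3))[3]$ and using $P_2^\vee(-3)\simeq P_1$, $P_1^\vee(-3)\simeq P_2$ together with biduality, one obtains a $2$-extension built from the same terms $\OO_{\PP^3}(-2)^{r+2}$, $\OO_{\PP^3}(-1)^{r+2}$ but with the transpose $\transpose A$ as differential, representing the image of $\beta$ under the duality involution. When its two ends are re-identified with $\FF$ and $\FF^\vee(-3)$ \emph{through $\phi$}, this involution is not $\tau$ but $-\tau$, the extra sign being exactly $\transpose{\phi}=-\phi$. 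Combined with the previous step this gives that the dual extension represents $-\tau(\beta)=-\beta$; that is, $\transpose A$ and $-A$ define the same class in $\Ext^2(\FF,\FF^\vee(-3))$.

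Finally, I would turn this equality of classes into honest skew-symmetry. Negating $\beta$ corresponds to the complex $[\OO_{\PP^3}(-2)^{r+2}\xrightarrow{-A}\OO_{\PP^3}(-1)^{r+2}]$, so the dual complex $[\OO_{\PP^3}(-2)^{r+2}\xrightarrow{\transpose A}\OO_{\PP^3}(-1)^{r+2}]$ and this one represent the same class. Both being made of the same minimal sums of line bundles, the induced isomorphism in $\D^b(\PP^3)$ is realised by an actual isomorphism of two-term complexes, i.e. by constant invertible matrices $Q_1,Q_2\in\GL_{r+2}(\C)$ with $\transpose A\,Q_2=-Q_1A$. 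Dualising this ladder produces a second such isomorphism, and averaging the two — this is where I must divide by $2$, whence the standing hypothesis of characteristic $\neq2$ — I can take the comparison to be self-dual, so that $Q_2=\transpose{Q_1}$. Then $\transpose{(Q_1A)}=\transpose A\,\transpose{Q_1}=\transpose A\,Q_2=-Q_1A$, i.e. $Q_1A$ is a skew-symmetric matrix of linear forms equivalent to $A$, proving that $A$ is skew-symmetrisable. The main obstacle is precisely this last paragraph: faithfully tracking the sign $\transpose{\phi}=-\phi$ through the dualisation of the whole four-term complex so that the \emph{symmetric} surviving class yields a \emph{skew} (not symmetric) matrix, and promoting the derived-category isomorphism $\cC\simeq\mathbf{R}\mathcal{H}om(\cC,\OO_{\PP^3}(-3))[3]$ to a genuinely self-dual one by symmetrising, which is exactly where characteristic $\neq2$ enters and where invertibility of the averaged isomorphism must be checked.
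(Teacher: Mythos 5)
Your plan is not a different route: it is the paper's own strategy, specialised by hand to $k=2$. The paper makes exactly your key observation — since $E$ has rank $2$, the skew part of the Ext group is $\HH^2$ of a line bundle on $\PP^3$, hence zero, so every $\beta$ is symmetric — then dualises the two-term complex, uses exactly your ``minimality'' vanishings (conditions \eqref{annullo} and \eqref{annullo2}, i.e. $\Ext^{>0}$ between the terms and $\Hom(\OO_{\PP^3}(-1),\OO_{\PP^3}(-2))=0$) to promote the equality of Yoneda classes to an honest isomorphism of two-term complexes, and finally tracks signs; this is Lemma \ref{lemma 0123} applied with $F=E(\frac{r}{4}-1)$, $L=\OO_{\PP^3}(-3)$, $k=2$. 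Even your sign statement is the paper's in disguise: the paper's dual complex $\cP'$ carries the differential $-\transpose{\p}_1$ and represents $+\beta$, whereas your dual complex carries $+\transpose{A}$ and represents $-\beta$; these are the same assertion with the sign placed in a different spot.

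That said, the two points you yourself label ``the main obstacle'' are genuine gaps as written, and they are precisely what Lemma \ref{lemma 0123} exists to close. First, the claim that the duality involution becomes $-\tau$ after re-identifying the ends through $\phi$ is asserted, not proved, and charging the sign entirely to $\transpose{\phi}=-\phi$ is suspicious: in the paper the sign is degree-dependent (the $k \bmod 4$ pattern of Lemma \ref{lemma 0123}, coming from the $(-1)^i\transpose{\p}_i$ conventions for dual complexes), so your bookkeeping must also verify the canonical duality sign in degree $2$; if that sign were opposite to what you tacitly assume, your argument would output a \emph{symmetrisable} matrix, which is impossible. Second, invertibility of the averaged comparison $Q_1'=\frac{1}{2}(Q_1+\transpose{Q_2})$ does not follow from invertibility of $Q_1,Q_2$; the repair is that the averaged pair is still a chain map lifting the identity on the two end sheaves $E(-\frac{r}{4}-2)$ and $E(\frac{r}{4}-1)$ of the extension \eqref{2-term ext}, so the five lemma forces it to be an isomorphism — exactly the device the paper invokes for the analogous symmetrisation in case \eqref{1} of Lemma \ref{lemma 0123}. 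With the degree-$2$ sign actually verified and the five-lemma repair inserted, your proof closes and coincides with the paper's proof of Claim \ref{claim3}.
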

  The claim means that $A$ is skew-symmetric in an appropriate basis,
  i.e., that up to composing $A$ with isomorphisms on the right and on the
  left, we get an honest skew-symmetric matrix.
  To prove this claim, we need a homological algebra lemma, that we state in greater generality
  for future reference.
  Let $F$ be a vector bundle on a smooth projective variety $X$ over $\C$, $L$
  be a line bundle on $X$.
  We have the canonical decomposition:
  \[
  \Ext^k(F,F^*\otimes L) \cong \HH^k(\wedge^2 F^* \otimes L) \oplus \HH^k(S^2 F^* \otimes L).
  \]
  We say that $\beta \in \Ext^k(F,F^*\otimes L)$ is {\it symmetric} if it belongs to $\HH^k(S^2 F^* \otimes L)$,
  and {\it skew-symmetric} if it lies in $\HH^k(\wedge^2 F^* \otimes L)$.

  \medskip

  Let $\cP$ be a bounded complex of coherent sheaves on $X$ (i.e., an object
  of $\D^b(X)$). The complex $\cP$ corresponds to an element of $\Ext^k(F,F^*\otimes L)$ if we have an exact complex:
  \begin{equation}
    \label{cP}
  \cP : 0 \to F^* \otimes L =P_{k+1} \xr{\p_k} P_k \to \cdots \to P_2 \xr{\p_1} P_1 \xr{\p_0}
  P_0 = F \to 0.  
  \end{equation}

\begin{lem}\label{lemma 0123}
  In the above setting, let $\cP$ be a complex of $k+2$ vector bundles corresponding to
  $\beta \in \Ext^k(F,F^*\otimes L)$, and assume that $\beta$ is
  symmetric.
  Moreover assume:
  \begin{align}
    \label{annullo}
    & \Ext^{>0}(P_i,P_j^* \otimes L) = 0, && \mbox{for all $i,j$;}\\
    \label{annullo2}
    & \Hom(P_i,P_{k-i}^* \otimes L)=0, &&  \mbox{for $i\le \lfloor \frac{k}{2} \rfloor$.} 
  \end{align}
  Then, up to isomorphism:
  \begin{enumerate}[i)]
  \item \label{0} if $k\equiv 0 \mod4$, the middle map of $\cP$ is symmetric;
  \item \label{1} if $k\equiv 1 \mod4$, the middle term of $\cP$ has a skew-symmetric duality;
  \item \label{2} if $k\equiv 2 \mod4$, the middle map of $\cP$ is skew-symmetric;
  \item  if $k\equiv 3 \mod4$, the middle term of $\cP$ has a symmetric duality.
  \end{enumerate}
  If $\beta$ is skew-symmetric, all signs in the above $4$
  cases must be reversed.
\end{lem}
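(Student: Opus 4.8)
The plan is to exploit the self-duality of $\cP$ under the twisted-dual functor $\mathbb{D}:=(-)^{*}\otimes L$, which interchanges the two ends $P_0=F$ and $P_{k+1}=F^{*}\otimes L$ and therefore carries the $k$-extension $\cP$ of \eqref{cP} to another $k$-extension of $F$ by $F^{*}\otimes L$. Concretely, applying $\mathbb{D}$ term by term and reindexing by $Q_i:=P_{k+1-i}^{*}\otimes L$ yields an exact complex $\cP^{\vee}$ of the same shape, representing a class $\beta^{\vee}\in\Ext^{k}(F,F^{*}\otimes L)$. The first task is to pin down the relation between $\beta^{\vee}$ and $\beta$ under the splitting $\Ext^{k}(F,F^{*}\otimes L)=\HH^{k}(\wedge^{2}F^{*}\otimes L)\oplus\HH^{k}(S^{2}F^{*}\otimes L)$. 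The operation $\beta\mapsto\beta^{\vee}$ is the transposition of the two copies of $F^{*}$, twisted by the Koszul sign produced by dualising and reversing a complex of length $k$. The transposition acts as $+1$ on the symmetric summand and $-1$ on the skew summand, while the reindexing contributes the sign $\varepsilon_k:=(-1)^{k(k+1)/2}$, which is $4$-periodic with pattern $+,-,-,+$. Hence $\beta^{\vee}=\varepsilon_k\,\beta$ when $\beta$ is symmetric, and $\beta^{\vee}=-\varepsilon_k\,\beta$ when $\beta$ is skew-symmetric.

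Next I would upgrade this proportionality of classes to an actual morphism of extensions. Since $\cP$ and $\cP^{\vee}$ represent proportional classes, there is a chain map $\phi\colon\cP\to\cP^{\vee}$ that is the identity on $P_0=F$ and a fixed scalar on $P_{k+1}=F^{*}\otimes L$. Both its existence and its rigidity are exactly what the hypotheses supply. The vanishing \eqref{annullo}, namely $\Ext^{>0}(P_i,P_j^{*}\otimes L)=0$, lets one construct $\phi$ by the usual step-by-step lifting along the exact complex, every obstruction lying in one of these groups; and, since $\cP$ and $\cP^{\vee}$ are acyclic, any chain map vanishing at both ends is null-homotopic, with homotopy components $h_i\in\Hom(P_i,Q_{i+1})=\Hom(P_i,P_{k-i}^{*}\otimes L)$. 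The vanishing \eqref{annullo2} kills these precisely for $i\le\lfloor k/2\rfloor$, which reaches the central index, so $\phi$ is uniquely determined in the middle degrees.

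The self-duality argument then runs as follows. Applying $\mathbb{D}$ to $\phi$ and using the canonical identification $(\cP^{\vee})^{\vee}\cong\cP$ produces a second chain map $\phi^{\vee}\colon\cP\to\cP^{\vee}$; comparing end behaviour shows that $\phi^{\vee}$ and $\phi$ agree at the ends up to the scalar $\lambda:=\varepsilon_k$ in the symmetric case (resp. $-\varepsilon_k$ in the skew case), so by the rigidity just established $\phi^{\vee}=\lambda\,\phi$ on the nose in the middle degrees. Reading this $\lambda$-self-duality at the centre gives the statement. For $k$ even the reflection $i\mapsto k+1-i$ swaps the two middle terms $P_{k/2}$ and $P_{k/2+1}$, and the isomorphism $\phi_{k/2+1}\colon P_{k/2+1}\xrightarrow{\sim}P_{k/2}^{*}\otimes L$ lets one view the middle map $\p_{k/2}$ as an $L$-valued bilinear form on $P_{k/2}$, which is symmetric if $\lambda=+1$ and skew if $\lambda=-1$. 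For $k$ odd the reflection fixes the index $(k+1)/2$ and $\phi_{(k+1)/2}\colon P_{(k+1)/2}\to P_{(k+1)/2}^{*}\otimes L$ is itself the asserted $\lambda$-symmetric duality on the middle term. Tabulating $\lambda$ against $k\bmod 4$ reproduces the four cases (i)--(iv), and flipping the symmetry type of $\beta$ reverses every sign; in particular $k=2$ with $\beta$ symmetric gives $\lambda=-1$, i.e. a skew-symmetric middle map, which is what Claim \ref{claim3} needs.

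The main obstacle is the sign bookkeeping of the first paragraph: isolating cleanly the Koszul sign attached to dualising and reversing a length-$k$ complex, and checking that it combines with the transposition sign to give exactly $\varepsilon_k=(-1)^{k(k+1)/2}$ with its $4$-periodic pattern. The delicate point is that $\mathbb{D}$ introduces degree-dependent signs in the differentials and in the shift $[k+1]$ needed to realign $\mathbb{D}\cP$ with the standard indexing, and one must verify that these collapse to a single global scalar acting uniformly on the degree-$0$ comparison map $\phi$, rather than a sign varying with the index. Once this is settled, everything else is a routine dévissage controlled by \eqref{annullo} and \eqref{annullo2}, and the four cases follow by simply evaluating $\varepsilon_k$.
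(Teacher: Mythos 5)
Your proposal reaches the correct conclusions and follows the same basic strategy as the paper: dualize the extension, use \eqref{annullo} to produce an honest chain map between $\cP$ and its twisted dual, use \eqref{annullo2} to make that comparison rigid, and read off the symmetry at the middle. Where you genuinely differ is in the organization of the signs. The paper works with the signed dual complex $\cP'$ (differentials $(-1)^i\transpose{\p}_i$), asserts that $\cP'$ represents $\beta$ again, and then does the sign bookkeeping by hand, case by case, replacing $\varphi_i$ by $\transpose{\varphi}_{k+1-i}$ (for $k$ even) or by $(-1)^i\transpose{\varphi}_i$ (for $k\equiv 1$), and finally symmetrizing at the middle via $\psi=\frac12(\varphi_{2h+1}-\transpose{\varphi}_{2h+1})$ plus the five lemma when $k$ is odd. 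You instead take the unsigned dual, collapse all signs into the single scalar $\varepsilon_k=(-1)^{k(k+1)/2}$ relating $\beta^\vee$ to the transposed class, and deduce the middle symmetry from the rigidity identity $\phi^\vee=\lambda\phi$; this treats all four congruence classes uniformly and dispenses with the five-lemma step. The two bookkeepings are exactly equivalent: passing from your unsigned dual to the paper's $\cP'$ flips $\lfloor (k+1)/2\rfloor$ differentials, each flip negating the represented class, and $(-1)^{\lfloor (k+1)/2\rfloor}=\varepsilon_k$, so your pattern $+,-,-,+$ reproduces the paper's four cases (and for $k=2$, symmetric $\beta$, the skew middle map needed in Claim \ref{claim3}).

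Two caveats. First, the step you flag as the main obstacle, namely that dualization multiplies the transposed class by exactly $\varepsilon_k$ and by nothing index-dependent, is indeed the crux, but it is the same fact the paper itself assumes without proof (its assertion that the class of $\cP'$ is again $\beta$). It does hold: splice $\cP$ into $k$ short exact sequences; dualizing each $1$-extension negates its class (a direct cocycle computation), contributing $(-1)^k$, while reversing the order of the Yoneda product of $k$ degree-one classes contributes the Koszul sign $(-1)^{k(k-1)/2}$, and the product is $\varepsilon_k$. Second, your claim that a chain map between the two acyclic complexes vanishing at both ends is null-homotopic does not follow from acyclicity alone; constructing the homotopy step by step meets obstructions in Ext groups of the syzygies, which is exactly what \eqref{annullo} kills (this is how the paper uses Kapranov's lemma). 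Since \eqref{annullo} is among your hypotheses this is only an imprecision of attribution, not a flaw; note also that \eqref{annullo2} for $i\le\lfloor k/2\rfloor$ in fact kills \emph{all} homotopy components, because $\Hom(P_i,P_{k-i}^*\otimes L)\simeq\Hom(P_{k-i},P_i^*\otimes L)$, so your $\phi$ is unique in every degree, not just the central ones.
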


\begin{proof}[Proof of Lemma \ref{lemma 0123}]
  We treat the symmetric case, the skew-symmetric one being analogous.

  We dualise the expression \eqref{cP} of $\cP$ and we twist by $L$ (we can do this with no harm since the $P_i$'s are
  locally free). We denote the resulting
  complex by $\cP'$. In view of the standard sign convention that we adopted for dual complexes, $\cP'$ reads:
  \[
  \cP' : 0 \to F^* \otimes L \simeq P_{0}^* \otimes L \xr{\transpose{\p}_0} P_{1}^* \otimes
  L \xr{-\transpose{\p}_1}P_{2}^* \otimes L\to \cdots 
  \to P_{k}^* \otimes L \xr{(-1)^k\transpose{\p}_k} P_{k+1}^* \otimes L \simeq F \to 0.  
  \]

  Since $\beta$ is symmetric, the class in $\Ext^k(F,F^*\otimes L)$
  corresponding to $\cP'$ is again $\beta$, so the two extensions are
  equivalent. Even though this does not imply the
  existence of an isomorphism $\cP \to \cP'$ in general, but it does under our hypothesis.
  Indeed, from \cite[Ex. 5
  Chapter III.6]{maclane} we learn that there exists a complex $\Q$ of
  $k+2$ terms, equipped with maps $\Q \to \cP$ and $\Q \to \cP'$
  lifting the identity over the terms $F^* \otimes L$ and $F$ at the
  two ends of $\cP$ and $\cP'$.
  In other words, the cones of the maps $\beta : F \to F^*\otimes L[k]$
  and $\transpose{\beta} : F \to F^*\otimes L[k]$ are quasi-isomorphic,
  hence isomorphic in $\D^b(X)$ cf. \cite[Page
  32]{libro_Huybrechts}. The situation is described in the following
  diagram: 
  $$
  \xymatrix@R-1ex{
    \cP:&0 \ar[r]& F^* \otimes L \ar[r]^{\p_{k}}& P_k \ar[r]^{\p_{k-1}}& \:\:\:\cdots\:\:\:
    \ar[r]& P_{1} \ar[r]^{\p_0} &F \ar[r]& 0\\
    \Q: &0 \ar[r]& F^* \otimes L \ar@{=}[d]\ar@{=}[u]\ar[r] & Q_k
    \ar[d]\ar[u]\ar[r]& \:\:\:\cdots \:\:\:\ar[r]& Q_1
    \ar[d]\ar[u]\ar[r] &F \ar@{=}[d]\ar@{=}[u] \ar[r]& 0\\
    \cP':&0 \ar[r]& F^* \otimes L \ar[r]_{\transpose{\p}_0}& P_{1}^* \otimes
    L \ar[r]_{-\transpose{\p}_1}&\:\:\:\cdots \:\:\:
    \ar[r]& P_{k}^* \otimes L \ar[r]_{(-1)^k\transpose{\p}_k} &F\ar[r]& 0}
  $$
  
Under the condition $\Ext^p(P_i,P_j^* \otimes L) = 0$ for $p>0$ appearing in \eqref{annullo}, the complexes $\cP$ and $\cP'$ are actually homotopic, see \cite[Lemma 1.6]{kapranov:derived}.
The further condition \eqref{annullo2} implies that the homotopy maps $P_i \ra P_{k-i}^* \otimes L$ are zero, hence $\cP$ and $\cP'$ are isomorphic complexes. 

\medskip
  So let  $\varphi : \cP \to \cP'$ be an isomorphism lifting the identity over $F$ and $F^*\otimes L$.
  For all $0 \le i \le k+1$, we have isomorphisms
  $\varphi_i : P_i \to P_{k+1-i}^* \otimes L$, with $\varphi_0=\id_F$ and
  $\varphi_{k+1}=\id_{F^* \otimes L}$, such that the following diagrams
  commute:
  \[
  \xymatrix@C+2ex{
    (D_i) && P_{k+1-i} \ar^-{\p_{k-i}}[r] \ar_-{\varphi_{k+1-i}}[d] & P_{k-i} \ar^-{\varphi_{k-i}}[d]\\
    && P_{i}^* \otimes L  \ar_-{(-1)^i\transpose{\p}_i}[r] \ar[r]  & P_{i+1}^* \otimes L
  }
  \]
  
  Let us now look at case \eqref{0}, so $k=4 h$.
  For $2 h +1 \le i \le k+1$, we can replace $\varphi_i$ with
  $\transpose{\varphi}_{k+1-i}$. We obtain squares $(D'_i)$ analogous to the $(D_i)$'s above, and the diagram has the form: 

  \[
  \xymatrix{ \ldots \ar[r] & P_{4h+1-i} \ar@{}[dr]|{(D'_i)}\ar[d]_-{\transpose{\varphi}_{i}}\ar[r]^-{\p_{4h-i}} & P_{4h-i} \ar[d]^-{\transpose{\varphi}_{i+1}}\ar[r] &\ldots \ar[r] & P_{2h+1-i} \ar@{}[dr]|{(D_i)}\ar[d]_-{\varphi_{2h+1-i}} \ar[r]^-{\p_{2h-i}} & P_{2h-i} \ar[d]^-{\varphi_{2h-i}}\ar[r]&\ldots \\
\ldots \ar[r] & P_i^* \otimes L \ar[r]_-{(-1)^i\transpose{\p}_i} & P_{i+1}^* \otimes L \ar[r] &\ldots \ar[r] & P_{2h+i}^* \otimes L 
\ar[r]_-{(-1)^i\transpose{\p}_{2h+i}} & P_{2h+1+i}^* \otimes L \ar[r] &\ldots }
  \]

  An easy computation shows that the squares in the diagram above still commute, thanks to the good behavior of sign changes. 
The diagram is symmetric with respect to the middle square $(D_{2 h}')$, that looks like this:
  \[
  \xymatrix@C+2ex{
    (D_{2h}') && P_{2h+1} \ar^-{\p_{2h}}[r] \ar_-{\transpose{\varphi}_{2h}}[d] & P_{2h} \ar^-{\varphi_{2h}}[d]\\
    && P_{2 h}^* \otimes L  \ar_-{\transpose{\p}_{2h}}[r] \ar[r]  & P_{2h+1}^* \otimes L
  }
  \]
  So up to isomorphism (i.e. up to replacing $\p_{2h}$ with
  $\varphi_{2h} \circ \p_{2 h}$), the middle differential of
  $\cP$ is a symmetric map.

  Case \eqref{2} is similar.
  Indeed, if $k = 4h +2$, we obtain new commuting diagrams $(D_i)'$ as above
  by replacing $\varphi_i$ with
  $\transpose{\varphi}_{k+1-i}$ for $2h +2 \le i \le k+1$, and the
  middle diagram is $(D_{2 h +1}')$, that yields:
  \[
  \varphi_{2h+1} \circ \p_{2h+1} = -\transpose{\p}_{2h+1} \circ  \transpose{\varphi}_{2h+1},
  \]
  so in this case the middle differential of $\cP$ is skew-symmetric
  (up to isomorphism).

  Let us now look at case \eqref{1}, so $k = 4h +1$.
  This time sign changes do not behave as well as before.
  To cope with this, we replace $\varphi_{k+1-i}$ with
  $(-1)^i\transpose{\varphi}_{i}$, for $0 \le i \le 2h$. We obtain a diagram of the form:

  \[
  \xymatrix{\ldots \ar[r] & P_{4h+2-i} \ar@{}[dr]|{(D'_i)}\ar[d]_-{(-1)^i\transpose{\varphi}_{i}}\ar[r]^-{\p_{4h+1-i}} & P_{4h+1-i} \ar[d]^-{(-1)^{i+1} \transpose{\varphi}_{i+1}}\ar[r] &\ldots   \ldots \ar[r] & P_{2h+1-i} \ar@{}[dr]|{(D_i)}\ar[d]_-{\varphi_{2h+1-i}} \ar[r]^-{\p_{2h-i}} & P_{2h-i} \ar[d]^-{\varphi_{2h-i}}\ar[r]&\ldots\\
\ldots \ar[r] & P_i^* \otimes L \ar[r]_-{(-1)^i\transpose{\p}_i} & P_{i+1}^* \otimes L \ar[r] &\ldots  \ldots \ar[r] & P_{2h+i}^* \otimes L \ar[r]_-{(-1)^i\transpose{\p}_{2h+i}} & P_{2h+1+i}^* \otimes L \ar[r] &\ldots }
  \]

  Again we get new commuting diagrams $(D_i)'$.
  The middle part of $\cP$ now gives the commuting diagram:
  \[
  \xymatrix@C+2ex{
    P_{2h+2} \ar^-{\p_{2h+1}}[r] \ar_-{\transpose{\varphi}_{2h}}[d] &    P_{2h+1} \ar^-{\p_{2h}}[r] \ar_-{\varphi_{2h+1}}[d] & P_{2h} \ar^-{\varphi_{2h}}[d]\\
    P_{2 h}^* \otimes L  \ar_-{\transpose{\p}_{2h}}[r] \ar[r] &P_{2 h+1}^* \otimes L  \ar_-{\transpose{-\p}_{2h+1}}[r] \ar[r]  & P_{2h+2}^* \otimes L
  }
  \]
  Transposing the rightmost square, and reading
  off the first square we get:
  \[
  -\transpose{\varphi}_{2 h + 1} \circ \p_{2 h + 1} =
  \transpose{\p}_{2h} \circ \transpose{\varphi}_{2h} = 
  \varphi_{2 h + 1} \circ \p_{2 h + 1}.
  \]
  This means that we can replace $\varphi_{2 h + 1}$ by $\psi = \frac 12
  (\varphi_{2 h + 1}-\transpose{\varphi}_{2 h + 1})$ without 
  spoiling the commutativity of our diagrams. Then $\psi$ will be an
  isomorphism by the five lemma, thus equipping $P_{2h+1}$ with a
  skew-symmetric duality.
  The last case is analogous to this one, so we omit it.
\end{proof}

  \begin{proof}[Proof of Claim \ref{claim3}] 
    We apply Lemma \ref{lemma 0123} to our setting. Then $X=\PP^3$, $F=E(\frac{r}{4}-1)$ and 
    $L=\OO_{\PP^3}(-3)$, so that $F^* \otimes L=E(-\frac{r}{4}-2)$. The complex $\cP$ that we are interested in is of course the cone $\cC$ corresponding 
to the distinguished element $\beta \in \Ext^2(E(\frac{r}{4}-1), E(-\frac{r}{2}-2))$, so in particular 
    $k=2$, $P_2=\OO_{\PP^3}(-2)^{r+2}$ and $P_1=\OO_{\PP^3}(-1)^{r+2}$. 
    
    Since $E$ has rank $2$, $\HH^2(\wedge^2 E(-\frac{r}{4}+1) \otimes \OO_{\PP^3}(-3))=0$, 
    meaning that all elements $\beta \in \Ext^2(E(\frac{r}{4}-1), E(-\frac{r}{2}-2))$ are symmetric. Moreover 
    conditions \eqref{annullo} and \eqref{annullo2} translate respectively in:
\[\Ext^{>0}(P_1,P_2^* \otimes L) \simeq \Ext^{>0}(P_2,P_1^* \otimes L) \simeq \HH^{>0}(\OO_{\PP^3}^{r+2})=0\] 
and in:
\[\Hom(P_1,P_1^* \otimes L) \simeq \HH^0(\OO_{\PP^3}(-1)^{r+2})=0,\]
 and are thus trivially satisfied. By part \eqref{2}, 
    the matrix $A$ that we have constructed is skew-symmetrizable.
    This concludes Claim \ref{claim3}, as well as the proof of Theorem \ref{nec&suff}.
\end{proof}

\begin{rem}
  Theorem \ref{nec&suff} is consistent with the results known for symmetric matrices. As we remarked in Section \ref{general set-up}, 
  in this case the same computation of invariants of the vector
  bundles involved holds. In \cite{Ilic_JM} the authors prove that
  if $r \ge 2$ is even,  
  then the maximal dimension of a linear space of symmetric $n \times
  n$ matrices of constant rank $r$ is $n-r+1$.  
  In other words, $4$-dimensional spaces of symmetric matrices of
  constant co-rank $2$ do not exist. 
  
  It is also worth pointing out that Claim \ref{claim3} is false on
  the projective plane $\PP^2$, simply because the group
  $\HH^2(\OO_{\PP^2}(-\frac{r}{2}-1))$ is non-zero  
  as soon as $r \ge 6$.

\end{rem}

\subsection{Simpler conditions for bundles with natural cohomology}

Imposing one further condition on the bundle $E$, namely natural cohomology, will enable us to 
simplify the requirements of $(\star)$ and $(\star\star)$. We recall that a vector bundle $E$ on $\PP^3$ 
has \emph{natural cohomology} if $\HH^i(E(t)) \neq 0$ for at most one $i$, any $t$. Remark that a rank $2$ bundle 
on $\PP^3$ with $c_1=0$, $c_2>0$ and natural cohomology is (Mumford-Takemoto) \emph{stable}, which in this setting is equivalent to the vanishing 
$\HH^0(E)=0$. Indeed, by Riemann-Roch we see that $\chi(E) \le 0$, and this, combined with the natural cohomology hypothesis, implies that the bundle has no sections.

\begin{thm}\label{conto kuz gen}
  Let $E$ be as in Theorem \ref{nec&suff}. If $E$ has natural cohomology, $(\star)$ and $(\star\star)$ reduce respectively 
  to an isomorphism:
    \begin{itemize}
    \item[$(\diamond)$]$\HH^0(E(\frac{r}{4}-1)) \simeq \HH^2(E(-\frac{r}{4}-2))$,
    \end{itemize}
    and a surjection:
    \begin{itemize}
    \item[$(\diamond\diamond)$]$\HH^0(E(\frac{r}{4})) \epi \HH^2(E(-\frac{r}{4}-1))$.
    \end{itemize}
  Hence if there exists an element $\beta \in \Ext^2(E(\frac{r}{4}-1),E(-\frac{r}{4}-2))$ 
  that induces $(\diamond)$ and $(\diamond\diamond)$, then there exists a skew-symmetric matrix of linear forms, 
  having size $r+2$ and constant rank $r$, and whose kernel is $E(-\frac{r}{4}-2)$. 
\end{thm}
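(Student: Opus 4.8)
The plan is to show that, once $E$ has natural cohomology, every requirement in $(\star)$ except the isomorphism $\mu^0_0$, and every requirement in $(\star\star)$ except the surjection $\mu^0_1$, is forced to hold by general properties of $E$ alone; the two surviving conditions are precisely $(\diamond)$ and $(\diamond\diamond)$, and the final conclusion then follows by feeding a $\beta$ inducing them into Theorem \ref{nec&suff}. The three inputs I would use throughout are: self-duality $E\simeq E^*$ (rank $2$, $c_1=0$), hence Serre duality in the form $\HH^i(E(t))\simeq \HH^{3-i}(E(-t-4))^*$; stability, which gives $\HH^0(E(t))=0$ for all $t\le 0$; and the Riemann--Roch formula recalled in Section \ref{general}, used to fix the sign of $\chi(E(t))$ at the relevant twists. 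Natural cohomology then upgrades each sign computation into a statement about a single cohomology group.

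First I would treat $(\star)$. A direct Riemann--Roch computation gives $\chi(E(-\frac r4-2))=c_2(E)>0$, while stability yields $\HH^0(E(-\frac r4-2))=0$; natural cohomology therefore concentrates the cohomology of $E(-\frac r4-2)$ in degree $2$, so that $\HH^0(E(-\frac r4-2))=\HH^1(E(-\frac r4-2))=\HH^3(E(-\frac r4-2))=0$ and $\HH^2(E(-\frac r4-2))\ne 0$ of dimension $c_2(E)$. This settles three of the four vanishings in $(\star)$ and, crucially, kills the target of $\mu^1_0$. On the other side, $\chi(E(\frac r4-1))=c_2(E)>0$ and $\HH^3(E(\frac r4-1))\simeq \HH^0(E(-\frac r4-3))^*=0$ by Serre duality and stability; natural cohomology then forces $\HH^1(E(\frac r4-1))=0$ as well. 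Thus $\mu^1_0\colon \HH^1(E(\frac r4-1))\to \HH^3(E(-\frac r4-2))$ is a map between zero spaces, hence trivially an isomorphism, and $\HH^3(E(\frac r4-1))=0$ is the last of the four vanishings. After all this, the only conditions in $(\star)$ that are not yet automatic are the vanishing $\HH^2(E(\frac r4-1))=0$ and the isomorphism $\mu^0_0$.

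The main obstacle is that $\chi(E(\frac r4-1))>0$ together with natural cohomology only tells me that exactly one of $\HH^0(E(\frac r4-1))$ and $\HH^2(E(\frac r4-1))$ is nonzero, and neither Serre duality nor stability alone decides which: the two possibilities are formally consistent with all the invariants at hand. The way I would get around this is to fold the leftover vanishing into the surviving isomorphism. The target $\HH^2(E(-\frac r4-2))$ of $\mu^0_0$ is nonzero (of dimension $c_2(E)$, by the previous paragraph), so if $\beta$ induces the isomorphism $(\diamond)$ then necessarily $\HH^0(E(\frac r4-1))\ne 0$, whence natural cohomology gives $\HH^2(E(\frac r4-1))=0$ for free; conversely $(\star)$ trivially implies $(\diamond)$. (Should instead $\HH^0(E(\frac r4-1))=0$, then both $(\star)$, which demands $\HH^2(E(\frac r4-1))=0$, and $(\diamond)$, which demands an isomorphism onto a nonzero space, are unsatisfiable, so the equivalence persists unconditionally.) This yields $(\star)\Leftrightarrow(\diamond)$.

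Finally I would run the identical scheme for $(\star\star)$ at the twists $\frac r4$ and $-\frac r4-1$. Riemann--Roch gives $\chi(E(\frac r4))=(\frac r4+1)(\frac r4+2)>0$ and $\chi(E(-\frac r4-1))=\frac r4(\frac r4-1)>0$ (positive for the admissible values $r\ge 8$); stability gives $\HH^3(E(\frac r4))\simeq\HH^0(E(-\frac r4-4))^*=0$ and $\HH^0(E(-\frac r4-1))=0$, so natural cohomology concentrates $E(-\frac r4-1)$ in degree $2$ (yielding $\HH^3(E(-\frac r4-1))=0$, the target of $\mu^1_1$) and forces $\HH^1(E(\frac r4))=0$ (the source of $\mu^1_1$). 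Hence $\mu^1_1$ is a trivial isomorphism, $\HH^3(E(\frac r4))=0$ and $\HH^0(E(-\frac r4-1))=0$ are automatic, and exactly as before the surjection $(\diamond\diamond)$ onto the nonzero space $\HH^2(E(-\frac r4-1))$ forces $\HH^0(E(\frac r4))\ne 0$, whence $\HH^2(E(\frac r4))=0$ by natural cohomology; so $(\star\star)\Leftrightarrow(\diamond\diamond)$. A class $\beta$ inducing $(\diamond)$ and $(\diamond\diamond)$ therefore induces $(\star)$ and $(\star\star)$, and Theorem \ref{nec&suff} produces the desired skew-symmetric matrix, completing the argument.
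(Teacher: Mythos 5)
Your proposal is correct and follows essentially the same route as the paper: stability plus Riemann--Roch plus natural cohomology concentrate the cohomology of $E(-\frac r4-2)$ and $E(-\frac r4-1)$ in degree $2$ with nonzero dimension, and then the isomorphism $(\diamond)$ (resp.\ surjection $(\diamond\diamond)$) forces $\HH^0$ of the positive twist to be nonzero, so natural cohomology kills all remaining groups and the other conditions of $(\star)$ and $(\star\star)$ become automatic. The only point you take as an input that the paper actually justifies (in the remark immediately preceding the theorem) is stability itself, which follows from natural cohomology together with $\chi(E)\le 0$; aside from that and a harmless miscount of which vanishings appear in the list $(\star)$, your argument matches the paper's, just written in more detail.
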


\begin{proof}
  We start with $(\star)$. As remarked above, the bundle $E$ is stable. But then $\hh^0(E(-\frac{r}{4}-2))=0$,  
  and by Riemann-Roch we see that $\chi(E(-\frac{r}{4}-2)) >0$. Hence our hypothesis of natural cohomology 
  translates in the fact that $\hh^p(E(-\frac{r}{4}-2))=0$ for $p \neq 2$. It follows that once we impose that 
  $\HH^0(E(\frac{r}{4}-1)) \simeq \HH^2(E(-\frac{r}{4}-2))$, then natural cohomology will force 
  $\hh^p(E(\frac{r}{4}-1))=0$ for $p \neq 0$ and all the other requirements are trivially satisfied.
  The same reasoning works for $(\star\star)$. 
\end{proof}

\section{Instanton bundles}\label{sezione istantoni generali}

Let us take a closer look at general instantons. We call $E$ a
(mathematical) \emph{instanton bundle of charge $k$}, or simply a $k$-instanton, if $E$ is a rank $2$ 
stable vector bundle on $\PP^3$ with Chern classes $c_1=0$ and $c_2=k$, satisfying the vanishing $\HH^1(E(-2))=0$.
A general $k$-instanton $E$ has natural cohomology \cite[Thm. 0.1(i)]{HH}, 
so Theorem \ref{conto kuz gen} applies.
Here comes the main result of this Section, namely that for general
instantons the requirements needed by Theorem \ref{nec&suff} reduce to a single
non-degeneracy condition.

\begin{thm}\label{starstar gratis}
  Let $r$ be a fixed integer number of the form $12s$ or $12s-4$, $s \in \N$. Let $\Er$ be a general $k$-instanton, with $k=\frac{r(r+4)}{48}$. 
  If $\Er$ satisfies condition $(\diamond)$ of Theorem \ref{conto kuz gen}, it also satisfies condition $(\diamond\diamond)$.
\end{thm}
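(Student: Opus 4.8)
The plan is to repackage the family of maps $\mu^0_t$ as a single morphism of graded modules over the homogeneous coordinate ring $S=\bigoplus_p\HH^0(\OO_{\PP^3}(p))$ and then exploit multiplicativity. Since the Yoneda product with the fixed class $\beta$ commutes with multiplication by linear forms, setting $N_t:=\HH^0(E(\frac{r}{4}-1+t))$ and $M_t:=\HH^2(E(-\frac{r}{4}-2+t))$, the collection $\mu:=\bigoplus_t\mu^0_t$ is a degree-zero $S$-linear map $\mu:N\to M$, where $N=\bigoplus_t N_t$ is a shift of the section module $\bigoplus_m\HH^0(E(m))$ and $M=\bigoplus_t M_t$ is a shift of the intermediate cohomology module $\bigoplus_m\HH^2(E(m))$. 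In this language $(\diamond)$ asserts exactly that $\mu_0=\mu^0_0$ is an isomorphism, while $(\diamond\diamond)$ asserts that $\mu_1=\mu^0_1$ is surjective.

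First I would observe that the reduction of $(\diamond\diamond)$ to a statement about $E$ alone is completely formal. Because $\mu$ is $S$-linear and $\mu_0$ is onto (being an isomorphism by $(\diamond)$), the image of $\mu_1$ contains $S_1\cdot\mu_0(N_0)=S_1\cdot M_0$, where $S_1=\HH^0(\OO_{\PP^3}(1))$. Hence the surjectivity demanded by $(\diamond\diamond)$ follows the moment one knows that
\[
S_1\cdot M_0=M_1,\qquad\text{i.e. that }\HH^0(\OO_{\PP^3}(1))\otimes\HH^2(E(-\tfrac{r}{4}-2))\to\HH^2(E(-\tfrac{r}{4}-1))\text{ is onto.}
\]
This no longer refers to $\beta$: it says that the cohomology module $M=\bigoplus_m\HH^2(E(m))$ carries no minimal generator in degree $-\frac{r}{4}-1$. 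Because $E$ is stable with natural cohomology (Theorem \ref{conto kuz gen}), $M$ is a finite-length module whose bottom degree is precisely $-\frac{r}{4}-2$, so $M_0$ is exactly the space of degree-$(-\frac{r}{4}-2)$ generators and the question is genuinely about the next graded piece $M_1$.

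The remaining point, which I expect to be the real work, is to establish $S_1\cdot M_0=M_1$ for a \emph{general} $k$-instanton. Here I would invoke the explicit minimal graded free resolution of a general instanton and of its cohomology module, as in \cite{raha,Decker}: for $E$ general the Betti numbers of $M=\bigoplus_m\HH^2(E(m))$ are as small as the Hilbert function allows, so $M$ is generated in its bottom degree $-\frac{r}{4}-2$ and in particular has no minimal generator in degree $-\frac{r}{4}-1$. Equivalently, using graded Serre duality $\HH^2(E(m))^*\simeq\HH^1(E(-m-4))$ together with the self-duality $E\simeq E^*$, one may phrase this as saying that the Rao module $\bigoplus_m\HH^1(E(m))$ of a general instanton has its socle concentrated in the top degree, with no socle in the second-from-top degree $\frac{r}{4}-3$; this is exactly the datum recorded by the resolutions of \cite{raha,Decker}. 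Granting $S_1\cdot M_0=M_1$, the chain $M_1=S_1\cdot M_0\subseteq\im(\mu_1)\subseteq M_1$ forces $\mu_1$ to be onto, which is $(\diamond\diamond)$.

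The main obstacle is therefore entirely concentrated in this module-theoretic step: translating the known resolution of the general instanton into the precise vanishing of Betti (or socle) numbers in the single relevant degree, and verifying that the degrees match up as claimed in both cases $r=12s$ and $r=12s-4$. The genericity of $E$ enters only at this point; the $S$-linear reduction in the first two paragraphs is formal and holds for every $E$ satisfying $(\diamond)$.
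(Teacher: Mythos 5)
Your proposal is correct and follows essentially the same route as the paper: the paper establishes the same multiplication surjectivity $\HH^0(\OO_{\PP^3}(1)) \otimes \HH^2(E(-\frac{r}{4}-2)) \epi \HH^2(E(-\frac{r}{4}-1))$ by showing that the graded module $\HH^2_*(E)$ of a general instanton is generated in its minimal degree $-\frac{r}{4}-2$ (obtained by juxtaposing the monad with the sheaf resolution of Proposition~\ref{our res}, resolving $\HH^1_*(E)$, and dualizing via $\Ext^4_R$), and then concludes $(\diamond\diamond)$ from $(\diamond)$ by exactly your $S$-linearity observation, phrased there as a commutative square. The only difference is one of emphasis: you defer the module-theoretic step to the resolutions of \cite{raha,Decker}, whereas the paper carries out that computation explicitly, case by case for $r=8,12,20$ and $r \ge 24$.
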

 
Recall that in our setting the only allowed second Chern class is
$c_2(E)=\frac{r(r+4)}{48}$. However, for consistency with instanton
literature, we still denote by $k$ the charge of $E$, keeping in mind
that $k=k(r)=\frac{r(r+4)}{48}$.

Our argument involves the (sheafified) minimal graded free
resolution of a general $k$-instanton $E$.
Let $v$ be the smallest integer such that $E(v)$ has non-zero global sections. Using 
Riemann-Roch, we compute that \cite[Rem. 8.2.3]{Hart_ist}
$v=\frac{r}{4}-1$, and we find $\hh^0(E(\frac r4 -1))=k$ and 
$\hh^1(E(\frac{r}{4}-2))=k$. Moreover, from \cite{raha} we learn what
the minimal graded free
resolution of $E$ looks like.
A direct computation, together with the assumption of natural
cohomology, then shows the following:

\begin{prop}\label{our res}
  Let $r$ be a fixed integer number of the form $12s$ or $12s-4$, $s \in \N$. Let $E$ be a general $k$-instanton, with $k=\frac{r(r+4)}{48}$. 
  Then $E$ admits the following resolution:
  \[    \mbox{$0 \longrightarrow 
      \OO_{\PP^3}(-\frac{r}{4}-1)^k\\
        \longrightarrow
      {\begin{array}{c}
          \OO_{\PP^3}(-\frac{r}{4})^b\\ 
          \oplus\\ 
          \OO_{\PP^3}(-\frac{r}{4}-1)^c
        \end{array}}\longrightarrow
      {\begin{array}{c}
          \OO_{\PP^3}(-\frac{r}{4}+1)^k\\ 
          \oplus\\ 
          \OO_{\PP^3}(-\frac{r}{4})^a
        \end{array}} \longrightarrow  E \longrightarrow  0$, \quad \mbox{where:}}
  \]
    \begin{enumerate}[i)]
    \item if $r=8$, then $a=4,\:b=0,\:c=6$;
    \item if $r=12$, then $a=4,\:b=0,\:c=10$;
    \item if $r=20$, then $a=2,\:b=0,\:c=20$;
    \item if $r \ge 24$, then $a=0$, $b=k-\frac{r}{2}-2$,
      $c=k+\frac{r}{2}$.
    \end{enumerate}
\end{prop}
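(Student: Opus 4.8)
The plan is to extract the (sheafified) minimal graded free resolution of a general $k$-instanton $E$ from the known structure of its cohomology module, and then determine the ranks $a,b,c,k$ appearing at each homological degree by combining graded Betti-number constraints with the natural cohomology assumption. First I would recall from \cite{raha} that a general instanton has a monad/Beilinson-type description whose associated graded module $M = \bigoplus_{t} \HH^1(E(t))$ (or the appropriate deficiency module) has a minimal free resolution over the polynomial ring $S = \C[x_0,\dots,x_3]$ of a prescribed shape. Sheafifying that resolution, together with the splitting of the first syzygies into the two twists $\OO(-\frac r4)$ and $\OO(-\frac r4 -1)$, produces a complex of the stated form. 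The exponent $k$ on the outer terms is forced by $\hh^0(E(\frac r4-1)) = k$ and $\hh^1(E(\frac r4 -2)) = k$, which were computed above via Riemann--Roch; these are exactly the number of minimal generators of the section module in the lowest degree and the number of top syzygies, respectively.

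Next I would pin down $a,b,c$ by imposing that the alternating sum of the terms, twisted arbitrarily, reproduces the Hilbert polynomial $\chi(E(t))$ dictated by Riemann--Roch. Writing the Euler characteristic of the complex and matching coefficients of the resulting polynomial identity in $t$ gives linear relations among $a,b,c,k$; one such relation is the rank condition (the alternating sum of ranks is $\rk E = 2$), and another comes from matching $c_2 = k$. Concretely I expect the relations $k - b + c - k = \text{(rank bookkeeping)}$ and a degree-one coefficient equation to yield $c - b = \frac r2 + 2$ and $a = \max(0, \text{correction term})$, which is what produces the generic value $b = k - \frac r2 - 2$, $c = k + \frac r2$, $a = 0$ once $b \ge 0$. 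The three exceptional ranges $r = 8, 12, 20$ are precisely the cases where the generic formula would give $b < 0$; there one sets $b = 0$ and solves the remaining equations, which shifts the discrepancy into a nonzero $a$ and into $c$.

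The step I expect to be the main obstacle is justifying the \emph{minimality} and the exact \emph{splitting} of the middle terms into the two distinct twists, rather than merely getting a complex with the right Euler characteristic. The Euler-characteristic computation only constrains the alternating sum of the Betti numbers; to conclude that no further cancellation occurs (equivalently, that the resolution is minimal with exactly these generators and syzygies in exactly these degrees) one must invoke natural cohomology. The point is that natural cohomology forces $\HH^p(E(t)) \ne 0$ for at most one $p$ at each $t$, so the relevant $\Ext$ groups computing consecutive Betti numbers cannot both be nonzero in overlapping degrees; this rules out ``ghost'' generators that would appear and cancel between adjacent homological positions. Thus minimality upgrades the numerical identity into an honest statement about the individual ranks $a,b,c$.

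Finally I would verify the four cases separately by substitution: for $r \ge 24$ one checks $b = k - \frac r2 - 2 \ge 0$ (using $k = \frac{r(r+4)}{48}$, which grows quadratically while $\frac r2 + 2$ is linear, so the inequality holds for all $r \ge 24$ of the allowed form $12s$ or $12s-4$) and reads off $a = 0$, $c = k + \frac r2$. For $r = 8, 12, 20$ one evaluates $k$ explicitly, finds the generic $b$ would be negative, sets $b = 0$, and solves for the residual $a$ and $c$, obtaining the tabulated triples; these small cases are a direct numerical check once the structure of the resolution is fixed, and I would present them as a short computation rather than a separate argument.
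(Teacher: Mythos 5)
Your proposal follows essentially the same route as the paper's (very terse) argument: invoke Rahavandrainy's theorem \cite{raha} for the shape of the minimal graded free resolution of a general instanton, use Riemann--Roch together with natural cohomology to get $\hh^0(E(\frac r4 -1))=k$, $\hh^1(E(\frac r4 -2))=k$ and the full Hilbert function, and then pin down $a,b,c$ by a direct numerical computation in which the case split $r\in\{8,12,20\}$ versus $r\ge 24$ is exactly governed by the sign of $k-\frac r2-2$. The one caveat is your third paragraph: natural cohomology fixes the Hilbert function but does not by itself exclude cancelling (``ghost'') Betti numbers --- that non-overlap property comes from the maximal-rank genericity established in \cite{raha} for general instantons, which you already cite for the prescribed shape, so minimality should be attributed there rather than to natural cohomology.
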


We are now ready to prove the main result of this Section.

\begin{proof}[Proof of Theorem \ref{starstar gratis}]
  We use the structure of the graded module $\HH^2_*(\Er):=\bigoplus_{t \in \Z} \HH^2(\Er(t))$ to prove that
   there is a surjection: 
  \[
  \mbox{$\HH^0(\OO_{\PP^3}(1)) \otimes \HH^2(\Er(-\frac{r}{4}-2)) \epi \HH^2 (\Er(-\frac{r}{4}-1))$}.
  \]
  Combined with condition $(\diamond)$ this surjection gives us the diagram: 
  $$\xymatrix{\HH^0(\OO_{\PP^3}(1)) \otimes \HH^0(\Er(\frac{r}{4}-1)) \ar[d]
    \ar[r]^-{\simeq}& \HH^0(\OO_{\PP^3}(1)) \otimes \HH^2(\Er(-\frac{r}{4}-2))
    \ar@{->>}[d]\\ 
    \HH^0(\Er(\frac{r}{4})) \ar[r]& \HH^2(\Er(-\frac{r}{4}-1))}$$
  which implies $\HH^0(\Er(\frac{r}{4})) \epi
  \HH^2(\Er(-\frac{r}{4}-1))$, that is, condition $(\diamond\diamond)$.

  \vspace{0,2cm}

  Let us see this in detail. We call $R:=\C[x_0,x_1,x_2,x_3]$ the
  polynomial ring in $4$ variables, and for any $R$-module $\bM$ we
  denote by $\bM^\vee$ its dual as  
  $R$-module and by $\widetilde{\bM}$ its sheafification. Combining the
  results \cite[Prop. 3.2]{HR} and \cite[Prop. 1]{Decker}, we see
  that  
  if $F$ is a rank 2 vector bundle on $\PP^3$, with first Chern class
  $c_1=c_1(F)$, then the module $\bM = \HH^1_*(F)$ admits a minimal
  graded free resolution of the form:
  $$
  0\to L_4 \to L_3 \to L_2 \oplus L_0^\vee (c_1) \to L_1 \to L_0 \to \bM \to 0,
  $$
  where $F$ is the cohomology of the monad $\widetilde{L_0^\vee}(c_1) \to \widetilde{L_1} \to \widetilde{L_0}$, 
  and has a minimal graded free resolution of the form:
  \[
  0 \to \widetilde{L_4} \to \widetilde{L_3} \to \widetilde{L_2} \to F \to 0.
  \]
  Now recall that a $k$-instanton bundle is the cohomology of a monad:
  \begin{equation}\label{monade gen}
    \OO_{\PP^3}(-1)^k \to  \OO_{\PP^3}^{2k+2} \to \OO_{\PP^3}(1)^k.
  \end{equation}

  If $r \ge 24$, by Proposition \ref{our res}$(iv)$ $\Er$ admits the following resolution:
  \begin{equation}\label{res spec}
    \mbox{$0 \to  \OO_{\PP^3}(-\frac{r}{4}-2)^k \to  
      \OO_{\PP^3}(-\frac{r}{4}-1)^{k+\frac{r}{2}} \oplus \OO_{\PP^3}(-\frac{r}{4})^{k-\frac{r}{2}-2} \to  
      \OO_{\PP^3}(-\frac{r}{4}+1)^k \to  \Er \to  0$}.
  \end{equation}

  From (\ref{res spec}) and (\ref{monade gen}) we obtain the associated sequences of free $R$-modules, and by juxtaposing 
  them we resolve the first cohomology module $\bM$.
  \[
  \xymatrix@C-3ex{0 \ar[r] &R(-\frac{r}{4}-2)^k \ar[r] 
    &{\begin{array}{c}
        R(-\frac{r}{4})^{k-\frac{r}{2}-2}\\
        \oplus\\ 
        R(-\frac{r}{4}-1)^{k+\frac{r}{2}} 
      \end{array}}  \ar[r]&
    {\begin{array}{c}
        R(-\frac{r}{4}+1)^k\\
        \oplus\\
        R(-1)^k
      \end{array}}
    \ar[r] & 
    R^{2k+2}\ar[r] & R(1)^k \ar[r] & \bM \ar[r] &0.
  }
  \]

  We have that $\Ext^i_R(\bM,R)=0$ for $i \neq 4$, and $\Ext^4_R(\bM,R)$ is identified via Serre duality 
  with $\bM^*(4) \simeq \HH^2_*(\Er)$, where $\bM^*$ is the dual of $\bM$ as
  vector spaces.
  This gives the following graded resolution of the module $\HH^2_*(\Er)$:
  \[
  \mbox{$
  0 \to R(-1)^k \to  R^{2k+2} \to
    {\begin{array}{c}
        R(\frac{r}{4}-1)^k\\
        \oplus\\
        R(1)^k
      \end{array}}
    \to  
    {\begin{array}{c}
        R(\frac{r}{4})^{k-\frac{r}{2}-2}\\
        \oplus\\ 
        R(\frac{r}{4}+1)^{k+\frac{r}{2}} 
      \end{array}} 
    \to  R(\frac{r}{4}+2)^k \to  \HH^2_*(\Er) \to 0.$}
  \]
  In particular $\HH^2_*(\Er)$ is generated as graded $R$-module  by
  its elements of minimal degree $-\frac{r}{4}-2$.
  Hence all elements of $\HH^2(\Er(-\frac{r}{4}-1))$ can be obtained
  as linear combination of elements of $\HH^2(\Er(-\frac{r}{4}-2))$,
  with linear forms as coefficients. This concludes our proof for the
  case $r \ge 24$.

  The cases $r=8,12$ and $20$ are identical, once we substitute (\ref{res spec}) with the resolutions entailed by Proposition \ref{our res} $(i),(ii)$ and $(iii)$. 
  We obtain the resolution of the second cohomology module and thus surjections 
  $\HH^0(\OO_{\PP^3}(1)) \otimes \HH^2(\Er(-\tau)) \epi \HH^2(\Er(-\tau+1))$ with $\tau=4,5$ and
  $7$, for $r=8,12$ and $20$ respectively. 
\end{proof}

As a consequence of Theorem \ref{starstar gratis}, in order for a general instanton $\Er$ to produce new examples of $(r+2)\times(r+2)$ skew-symmetric 
matrices of constant co-rank 2, we only need to find an element of $\Ext^2(\Er(\frac{r}{4}-1),\Er(-\frac{r}{4}-2))$ satisfying condition $(\diamond)$ 
required by Theorem \ref{conto kuz gen}. Doing this is far from being easy. In the next Sections \ref{caso 8} and \ref{caso 12} we show how when 
$r=8$ and $r=12$ this result can be achieved.

It is worth underlining that the difficulty of finding examples increases significantly as $r$ grows. Already for 
the next two cases $r=20$ and $r=24$ the space $\Ext^2(\Er(\frac{r}{4}-1),\Er(-\frac{r}{4}-2))\simeq \HH^2(\Er \otimes \Er (-\frac{r}{2}-1))$ is expected to be zero. 
Indeed, the two would correspond to instantons of charge $10$ and $14$ respectively: in the first case we have $\chi(S^2\Er(-11))=0$, 
whereas for the latter we are in the even worse situation where $\chi(S^2 \Er (-13)) <0$.

\section{Instantons of charge two, matrices of rank eight and Westwick's example}\label{caso 8}

Here we analyse in detail the case of skew-symmetric matrices $A$ of
linear forms of size $10$, having constant rank $8$, and their
relation with $2$-instantons.

\subsection{A new point of view on Westwick's example}\label{esempio westwick}

When we began our study, the only known example of a $4$-dimensional linear space of skew-symmetric constant co-rank $2$ matrices was 
due to Westwick. It appeared with almost no explanation in \cite[page 168]{Westwick}, where the author simply exhibited the following matrix:

\begin{equation}\label{matrice westwick}
  W=  \left(
    \begin{array}{cccccccccc}
      0    &0    &0    &0    &0    &0    &0    &x_0  &x_1  &0\\
      0    &0    &0    &0    &0    &0    &x_0  &x_1  &0    &x_2\\
      0    &0    &0    &0    &0    &-x_0 &x_1  &0    &x_2  &x_3\\
      0    &0    &0    &0    &x_0  &x_1  &0    &x_2  &x_3  &0\\
      0    &0    &0    &-x_0 &0    &0    &x_2  &-x_3 &0    &0\\
      0    &0    &x_0  &-x_1 &0    &0    &x_3  &0    &0    &0\\
      0    &-x_0 &-x_1 &0    &-x_2 &-x_3 &0    &0    &0    &0\\
      -x_0 &-x_1 &0    &-x_2 &x_3  &0    &0    &0    &0    &0\\
      -x_1 &0    &-x_2 &-x_3 &0    &0    &0    &0    &0    &0\\
      0    &-x_2 &-x_3 &0    &0    &0    &0    &0    &0    &0
    \end{array}
  \right),
\end{equation}
where $x_0,x_1,x_2,x_3$ are independent variables.
The exact sequence (\ref{se E}) here reads:
\[
0 \to  E(-2) \to  \OO_{\PP^3}^{10} \xr{W} \OO_{\PP^3}^{10}(1) \to
E(3) \to 0,
\]
with $c_1(E)=0$ and $c_2(E)=2$.
The bundle $E(2)$ is globally generated, so a general global section $s$
of $E(2)$ vanishes along a smooth irreducible curve $Y$ of degree $6 =
c_2(E(2))$ having canonical sheaf $\omega_Y=\OO_Y(c_1(E(2))-4)=\OO_Y$
and genus $g=1$, i.e. $Y$ is an elliptic sextic.
We have the standard exact sequence:
\begin{equation}\label{curva ellittica}
  0 \to \OO_{\PP^3} \xr{s} E(2) \to \II_Y(4) \to 0.
\end{equation}
Moreover, since $Y$ is not contained in a quadric, computing
cohomology from (\ref{curva ellittica}) we get that $\HH^0(E)=0$,  
so $E$ is stable. 
Tensoring (\ref{curva ellittica}) by $\OO_{\PP^3}(-4)$ and computing
cohomology again we see that $\HH^1(E(-2))=0$, which means that $E$ is
an instanton bundle of charge $2$. We refer to it as \emph{Westwick
  instanton} $E_W$.

We recall that all $2$-instantons are special 't Hooft instantons
\cite[Coroll. 9.6]{Hart_ist}, where the terminology goes as follows:
a $k$-instanton $E$ is \emph{'t Hooft} if it comes via Hartshorne-Serre
correspondence from the union of $k+1$ disjoint lines, while
$E$ is called \emph{special} if $\hh^0(E(1))$ attains the maximum
possible value, namely $\hh^0(E(1))=2$.
Any 't Hooft of charge $2$ is special, because $3$ skew lines in
$\PP^3$ are always contained in a quadric.

In the case of Westwick's example, we can recover this directly. One has $\hh^0(E_W(1))=\hh^0(\II_Y(3))>0$, so a non-zero global section 
of $E$ gives:
\begin{equation}\label{3 rette}
  0 \to \OO_{\PP^3} \to E_W(1) \to \II_Z(2) \to 0,
\end{equation}
where $Z$ is the union of three skew lines in $\PP^3$, or a flat degeneration of it, 
that is, the union of a double structure on a line $\ell$ and a second line $\ell'$, or a triple structure on a line. 
The curves $Z$ and $Y$ are connected by a liaison of type $(3,3)$.

Notice that from the cohomology sequence associated to (\ref{3 rette}), it follows that 
$\HH^1(E(-2))\simeq \Ext^1(E(2),\OO_{\PP^3}) =0$, so $E(2)$ cannot be obtained as a quotient of a vector bundle of rank bigger than $2$. 
(At least not in the sense of \cite[Def. 1(iii)]{sierra_ugaglia_gg}.)

Theorem \ref{nec&suff} gives a new interpretation of the matrix (\ref{matrice westwick}): it can be seen as the explicit 
decomposition of the cone of the morphism corresponding to the extension (\ref{se gen}). 
We would like to achieve a better understanding of the instanton $E_W$. We are especially interested in finding out its orbit under the natural action of $\SL(4)$. 

Let us recall some more facts about $2$-instantons and the moduli space $M_{\PP^3}(2;0,2)$ of stable rank $2$ vector bundles $E$ on 
$\PP^3$ with $c_1=0$, $c_2=2$. Our main references are \cite{Hart_ist} and \cite{costa_ottaviani}. 
Since every section of $E(1)$ vanishes on a curve $Z$ of degree $3$ and genus $-2$, 
$E$ determines, and is uniquely determined by, the following data:
\begin{enumerate}
\item a smooth quadric $Q\subset \PP^3$;
\item one of the two rulings of $Q$, that we call the \emph{first family};
\item a linear system $g^1_3$ without base points on the first family.
\end{enumerate}

The curve $Z$ generates the quadric $Q$, the lines in the support vary
in the rulings of the first family, and the curves $Z$ describe a
base-point-free $g^1_3$ in it. 
It follows that the moduli space $M_{\PP^3}(2;0,2)$ is fibred over $\PP^9\setminus \Delta$, the space of smooth quadrics, by two copies of a variety 
$\mathcal{V} \subset \G(1,3)$. $\mathcal{V}$ is the open set formed by vector $2$-planes in $\HH^0(\OO_{\PP^1}(3))$ corresponding to the base point free $g^1_3$'s. 
In particular $M_{\PP^3}(2;0,2)$ is smooth of dimension $13$. There is a natural action of $\SL(4)$ on it induced by automorphisms of $\PP^3$. Up to this action there is a $1$-dimensional family of non-equivalent bundles: $M_{\PP^3}(2;0,2)\sslash \SL(4)\simeq \mathcal{V}\sslash\SL(2)$, and the latter is a good quotient isomorphic to $\mathbb A^1$. 
All fibres are orbits except for one, which is a union of two orbits, corresponding respectively to pencils with one triple point (dimension $3$), and two triple points (dimension $2$). 

Moreover, $E$ is completely determined by the set of its jumping lines in the Grassmannian $\G(1,3)$, which is as follows:
\begin{itemize}
\item $\ell$ is $2$-jumping, i.e. $E|_{\ell}\simeq \OO_{\ell}(-2)\oplus \OO_{\ell}(2)$, if and only if $\ell$ is a line of the second family on $Q$;
\item $\ell$ is $1$-jumping, i.e. $E|_{\ell}\simeq \OO_{\ell}(-1)\oplus \OO_{\ell}(1)$, if and only if either $\ell \subset Q$ 
  belongs to the first family and is double or triple for the curve
  $Z$ that contains it, or $\ell$ is not contained in $Q$ but meets it
  in two points of a divisor $Z$ of the $g^1_3$;
\item $\ell$ cannot be $k$-jumping for $k\ge 3$.
\end{itemize}

For the Westwick instanton $E_W$, a direct computation shows that the $2$-jumping lines can be parametrised in the form $(x_0,x_1, \alpha x_0, \alpha x_1)$, for 
$\alpha \in \C$. These lines form the second ruling in the quadric $Q$, that has therefore equation  $x_0 x_3-x_1 x_2=0$. 
The two lines parametrised by $(0,x_1,0,x_3)$ and $(x_0,0,x_2,0)$ are the only $1$-jumping lines. This proves the following:

\begin{thm}\label{orbita speciale}
  The Westwick instanton belongs to the most special orbit of $M_{\PP^3}(2;0,2)$ under the natural action of $\SL(4)$, corresponding to the $g^1_3$'s with two triple points.
\end{thm}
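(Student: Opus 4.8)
The plan is to exploit the explicit description of $M_{\PP^3}(2;0,2)$ recalled above: a $2$-instanton is equivalent to the datum of a smooth quadric $Q$, a ruling of $Q$ (the first family), and a base-point-free $g^1_3$ on it, and the orbit of $E_W$ under $\SL(4)$ is detected, through $\mathcal V \sslash \SL(2) \simeq \mathbb A^1$, by the $\SL(2)$-class of that $g^1_3$. Since the most special orbit is precisely the one whose $g^1_3$ has two triple points, it suffices to recover the quadric $Q$ and the triple members of the $g^1_3$ of $E_W$ from the matrix $W$, using the dictionary between these data and the jumping-line configuration.

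First I would determine $Q$. For a variable line $\ell \subset \PP^3$ I would restrict $W$ to $\ell$ and read off the splitting type $E_W|_\ell \simeq \OO_\ell(-m)\oplus\OO_\ell(m)$ (equivalently, detect jumping by $\HH^0(E_W(-1)|_\ell)\ne 0$ and $2$-jumping by $E_W|_\ell \simeq \OO_\ell(-2)\oplus\OO_\ell(2)$). Carrying this out should show that the $2$-jumping lines are exactly those of the form $(x_0,x_1,\alpha x_0,\alpha x_1)$, $\alpha\in\C$. By the classification this locus is one entire ruling of $Q$ --- the second family --- so the two rulings sweep out the smooth quadric $x_0x_3 - x_1x_2 = 0$, and the complementary (first) family is the one carrying the $g^1_3$.

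Next I would locate the first-family $1$-jumping lines. By the classification these are the lines of the first family occurring with multiplicity $\ge 2$ in some member of the $g^1_3$, i.e. the ramification lines of the degree-$3$ cover $\phi\colon\PP^1\to\PP^1$ attached to the $g^1_3$. By Riemann--Hurwitz the ramification divisor of $\phi$ has degree $4$, so the number of distinct such lines is $4$ (four simple ramification points; generic stratum), $3$ (one total and two simple), or $2$ (two total ramification points); the last is exactly the stratum of $g^1_3$'s with two triple points, that is, the most special orbit. The computation should return precisely the two lines $(0,x_1,0,x_3)$ and $(x_0,0,x_2,0)$, both lying in the first family of $Q$; having exactly two forces both to be triple, so the $g^1_3$ of $E_W$ has two triple points and $E_W$ sits in the most special orbit, as claimed.

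The main obstacle is the explicit jumping-line computation from the $10\times 10$ matrix $W$: restricting $W$ to a moving line and extracting the splitting type is elementary but must be organised carefully over $\G(1,3)$, both to pin down the whole $2$-jumping ruling and to be sure no first-family jumping line has been missed. A lighter alternative would bypass the Grassmannian sweep: compute the pencil $\HH^0(E_W(1)) = \HH^0(K_W(3))$ directly from $K_W=\ker W$, identify this pencil of cubic curves $Z$ with the $g^1_3$ itself, and single out its two members that are triple lines; this again reduces to a finite linear-algebra calculation with $W$.
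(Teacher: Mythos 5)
Your proposal follows essentially the same route as the paper's proof: a direct computation of the jumping lines of $E_W$ from the matrix $W$, showing that the $2$-jumping lines sweep out the second ruling of the quadric $Q\colon x_0x_3-x_1x_2=0$ and that $(0,x_1,0,x_3)$ and $(x_0,0,x_2,0)$ are the only $1$-jumping lines of the first family, whence the $g^1_3$ has two triple points and $E_W$ lies in the most special orbit. Your Riemann--Hurwitz count (ramification divisor of degree $4$, so exactly two distinct ramification lines forces both to be total ramification points) is a nice explicit justification of the final step, which the paper leaves implicit.
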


In Remark \ref{fibrati iso} we saw that if two matrices are equivalent under the action of $\SL(r+2) \times \SL(4)$, their associated bundles 
will not necessarily be isomorphic as vector bundles, but will belong to the same orbit under the natural action of $\SL(4)$. 
(We have seen this issue with some detail for $2$-instantons, and we refer to \cite[Lemma 4.10 and Theorem 4.13]{costa_ottaviani} for the general case.) 
Hence from Theorem \ref{orbita speciale} we can deduce that any instanton bundle $E$ of charge $2$ that does not lie in the most special orbit 
can potentially give examples of $10 \times 10$ skew-symmetric matrices of co-rank $2$ different from Westwick's. 
We will now see that this is indeed the case.

\subsection{Instanton bundles of charge two and matrices of rank eight}

We show that the construction of Theorem \ref{conto kuz gen} works for all charge
$2$ instanton bundles (not only Westwick's one, and not only general
ones).
In light of the results we saw on $M_{\PP^3}(2;0,2)\sslash \SL(4)$
and the previous remarks, this means that our method shows the
existence of a continuous family (at least of dimension
$1=\dim(M_{\PP^3}(2;0,2)\sslash \SL(4))$)
of examples of $4$-dimensional linear spaces of skew-symmetric matrices of size $10$
and constant rank $8$.

\begin{thm}\label{2-ist}
  Any $2$-instanton on $\PP^3$ induces a 
  skew-symmetric matrix of linear forms in $4$ variables having size $10$ and constant
  rank $8$.
  \end{thm}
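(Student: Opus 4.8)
The plan is to reduce Theorem \ref{2-ist} to the single condition $(\diamond)$ of Theorem \ref{conto kuz gen} and then to verify it for every $2$-instanton, not merely the general one. Here $r=8$ and $k=c_2(E)=\frac{r(r+4)}{48}=2$, so the matrix sought has size $10$ and rank $8$, with kernel $E(-4)$. The first step is to observe that \emph{every} $2$-instanton $E$ has natural cohomology: this can be read off the monad $\OO_{\PP^3}(-1)^2\to\OO_{\PP^3}^6\to\OO_{\PP^3}(1)^2$ defining $E$, or from the vanishing $\HH^1(E(-2))=0$ together with stability and Serre duality. This is the point where one genuinely needs an argument rather than semicontinuity, since cohomology may jump on the special orbits. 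Granting it, Theorem \ref{conto kuz gen} applies, and it suffices to produce $\beta\in\Ext^2(E(1),E(-4))$ realising both the isomorphism $(\diamond)\colon \HH^0(E(1))\simeq\HH^2(E(-4))$ and the surjection $(\diamond\diamond)\colon \HH^0(E(2))\epi\HH^2(E(-3))$. Riemann--Roch together with natural cohomology gives $\hh^0(E(1))=\hh^2(E(-4))=2$, so $(\diamond)$ asks precisely that the map $\mu^0_0$ induced by $\beta$ between two $2$-dimensional spaces be an isomorphism.

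Next I would dispose of $(\diamond\diamond)$ exactly as in the proof of Theorem \ref{starstar gratis}: the graded module $\HH^2_*(E)=\bigoplus_t\HH^2(E(t))$ is generated in its minimal degree $-\frac r4-2=-4$, whence $\HH^0(\OO_{\PP^3}(1))\otimes\HH^2(E(-4))\to\HH^2(E(-3))$ is surjective, and combined with $(\diamond)$ this forces $(\diamond\diamond)$ via the commutative diagram of that proof. The only novelty is that the generation statement, established there for a \emph{general} instanton through the resolution of Proposition \ref{our res}(i), must persist for \emph{all} $2$-instantons; this follows from the uniform description of the intermediate cohomology module (equivalently, from the monad). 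After this step only condition $(\diamond)$ remains.

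The heart of the matter is then to produce, for each fixed $2$-instanton $E$, a class $\beta$ making $\mu^0_0$ invertible. The relevant object is the composition (cup-product) pairing $m\colon \Ext^2(E(1),E(-4))\to\Hom\bigl(\HH^0(E(1)),\HH^2(E(-4))\bigr)$ sending $\beta$ to $\mu^0_0$. Since source and target of $\mu^0_0$ are $2$-dimensional, the bad locus $\{\beta : \mu^0_0 \text{ singular}\}$ is the zero set of $\det\circ m$, a closed subset of $\Ext^2(E(1),E(-4))$; what must be shown is that it is \emph{proper}, i.e. that $\det\circ m\not\equiv 0$. The main obstacle is to establish this uniformly over the whole moduli space $M_{\PP^3}(2;0,2)$, including the most special orbit housing the Westwick instanton. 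I would attack it by computing $m$ explicitly: identify $\Ext^2(E(1),E(-4))\simeq\HH^2(S^2E(-5))$ (all classes being symmetric, exactly as in the proof of Claim \ref{claim3}), and use the minimal graded free resolution of $E$ — or the monad together with the quadric/base-point-free $g^1_3$ description recalled above — to write the pairing in coordinates and exhibit a single non-degenerate $\beta$. As a fallback one can argue by contradiction: were $\mu^0_0(\beta)$ singular for every $\beta$, the whole of $\Ext^2(E(1),E(-4))$ would annihilate a common degenerate direction of these two $2$-planes, a configuration one rules out from the indecomposability of $E$.

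Once such a $\beta$ is found, Theorem \ref{conto kuz gen} delivers a skew-symmetric matrix of linear forms of size $10$ and constant rank $8$ with kernel $E(-4)$, which proves the statement. Finally, since $(\diamond)$ is an open condition on $\beta$, the successful class may be taken general, in agreement with the continuous family of non-equivalent examples announced before the theorem.
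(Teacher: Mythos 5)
Your reduction steps are sound and essentially match the paper's framework: natural cohomology holds for all $2$-instantons (they are all special 't~Hooft bundles, so their cohomology table is uniform), condition $(\diamond\diamond)$ follows from $(\diamond)$ by the generation-in-lowest-degree argument of Theorem \ref{starstar gratis}, and everything reduces to finding $\beta\in\Ext^2(E(1),E(-4))$ making $\mu^0_0$ invertible. The problem is that this last step --- the actual mathematical heart of the theorem --- is never carried out. ``I would attack it by computing $m$ explicitly \dots and exhibit a single non-degenerate $\beta$'' is a plan, not a proof: nothing in your text produces such a $\beta$ or shows $\det\circ m\not\equiv 0$, and the difficulty is precisely that this must work for \emph{every} $2$-instanton, including those in the special orbits. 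Your fallback is not a valid argument either: it is true (Kronecker) that a linear space of singular $2\times 2$ matrices must have either a common kernel vector or a common image line, but you give no reason why such a configuration would contradict indecomposability of $E$; no link between a common degenerate direction in $\HH^0(E(1))$ or $\HH^2(E(-4))$ and a splitting of $E$ is established, or even plausible at this level of generality.

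What the paper does at this point is concrete and stronger: it proves that the Yoneda evaluation map $m\colon\Ext^2(E(1),E(-4))\to \HH^0(E(1))^*\otimes\HH^2(E(-4))$ is \emph{surjective}, so that every homomorphism --- in particular an isomorphism --- between these two $2$-dimensional spaces is realised by some $\beta$. By Serre duality, surjectivity of $m$ is equivalent to injectivity of the product map $U\otimes\HH^1(E)\to\HH^1(S^2E(1))$, where $U=\HH^0(E(1))$, and this is checked using exactly the structure you yourself invoke but do not exploit: evaluation of sections gives
\[
0\to U\otimes\OO_{\PP^3}(-1)\to E\to\OO_Q(-3,0)\to 0,
\]
with $Q$ the quadric attached to the $2$-instanton, and its symmetric square
\[
0\to\OO_{\PP^3}(-2)\to U\otimes E(-1)\to S^2E\to\OO_Q(-6,0)\to 0,
\]
twisted by $\OO_{\PP^3}(1)$, yields the injectivity because $\OO_{\PP^3}(-1)$ has no cohomology and $\OO_Q(-5,1)$ has no global sections. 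Some argument of this kind (or an honest explicit computation valid on all orbits) is needed to close your gap.
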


\begin{proof}
  Let $E$ be a $2$-instanton, and let $U = \HH^0(E(1))$.
  We prove that the following natural map is surjective:
  \[
  \HH^2(E \otimes E(-5)) \simeq \Ext^2(E(1),E(-4))  \to U^* \otimes \HH^2(E(-4)).
  \]
  Then condition $(\diamond)$ follows, and Theorem \ref{conto kuz gen} applies. 
  Recall that $\HH^2(E \otimes E(-5)) \cong \HH^2(S^2 E (-5))$, and
  note that by Serre duality we are reduced to show that the following 
  natural map, corresponding to Yoneda product, is injective:
  \begin{equation}
    \label{iniett}
  U \otimes \HH^1(E) \to \HH^1(S^2 E (1)).    
  \end{equation}

  Computing cohomology, from the resolution in Proposition \ref{our res}$(i)$---or from \eqref{3 rette} if one prefers---we see that 
  $U$ has dimension $2$. Moreover the natural evaluation of global sections of $E(1)$ gives an exact sequence:
  \[
    0 \to U \otimes \OO_{\PP^3}(-1) \to E \to \OO_Q(-3,0) \to 0,
  \]
  where $Q$ is a smooth quadric in $\PP^3$, and we 
  let the zero-locus of a global sections of $E$
  be a divisor of class $|\OO_Q(3,0)|$. (See the previous Subsection \ref{esempio westwick} for more details.)
  The symmetric square $S^2$ of this exact sequence gives:
  \[
    0 \to \OO_{\PP^3}(-2) \to U \otimes E(-1) \to S^2E \to \OO_Q(-6,0) \to 0.
  \]
  Tensoring by $\OO_{\PP^3}(1)$ and taking cohomology we see that \eqref{iniett} is injective, because the cohomology of $\OO_{\PP^3}(-1)$ vanishes and $\OO_Q(-5,1)$
  has no global sections. This concludes the proof.
\end{proof}

\section{Instantons of charge four and matrices of rank twelve}\label{caso 12}

According to Westwick's computation of invariants \cite{Westwick}, the
next possible value that the rank $r$ can attain after $8$ is $12$,
so we are now looking at $14 \times 14$ skew-symmetric matrices of linear forms in $4$
variables, having constant rank $12$.

In this Section we show from a theoretical point of view that general
$4$-instantons provide examples of these matrices. More in detail, in Theorem \ref{4-ist} we prove the existence of this kind of
matrices starting from $4$-instantons with certain cohomological conditions. 
We stress that general $4$-instantons satisfy the requirements of the Theorem.

Then, in the Appendix we give an explicit matrix, with a short outline of the
strategy to produce such examples.

\begin{thm}\label{4-ist}
Let $E$ be a $4$-instanton on $\PP^3$ with natural cohomology, such that
$E(2)$ is globally generated.
Then $\Ext^2(E(2),E(-5))) \ne 0$, and a general element $\beta$ of this extension group
induces a skew-symmetric matrix of linear forms in 4 variables,
having size $14$ and constant rank $12$.
\end{thm}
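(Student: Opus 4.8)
The plan is to reduce the statement, via the theorems already established, to a single injectivity of a multiplication map that the global generation hypothesis is designed to control. Throughout set $r=12$, so that $E(\frac r4-1)=E(2)$, $E(-\frac r4-2)=E(-5)$, and the matrix sought has size $14$ and constant rank $12$. Since $E$ has natural cohomology, Theorem \ref{conto kuz gen} says it suffices to produce $\beta\in\Ext^2(E(2),E(-5))$ realising the isomorphism $(\diamond)$, namely $\mu^0_0\colon\HH^0(E(2))\xrightarrow{\ \sim\ }\HH^2(E(-5))$, together with the surjection $(\diamond\diamond)$; and since a general $4$-instanton satisfies our hypotheses, for which Theorem \ref{starstar gratis} shows that $(\diamond)$ already forces $(\diamond\diamond)$, the entire problem collapses to finding one $\beta$ making $\mu^0_0$ an isomorphism.

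First I would record dimensions and the nonvanishing of the extension group. Using $E^*\simeq E$ and $\wedge^2E\simeq\OO_{\PP^3}$ gives $\Ext^2(E(2),E(-5))\simeq\HH^2(E\otimes E(-7))\simeq\HH^2(S^2E(-7))$, the $\wedge^2$-part contributing only $\HH^2(\OO_{\PP^3}(-7))=0$. A Riemann--Roch computation yields $\chi(S^2E(-7))=20$, while semistability of $S^2E$ (in characteristic $0$ a symmetric power of a semistable bundle is semistable) forces $\HH^0(S^2E(-7))=0$ because the slope is negative; hence $\dim\Ext^2(E(2),E(-5))=\hh^2(S^2E(-7))\ge 20>0$, which proves the first assertion. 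Natural cohomology and Riemann--Roch give $\hh^0(E(2))=\hh^1(E(1))=4$, and Serre duality gives $\HH^2(E(-5))\simeq\HH^1(E(1))^*$, so $\mu^0_0$ is a map between two $4$-dimensional spaces. Therefore ``$\mu^0_0$ is an isomorphism'' is a nonempty open condition on $\beta$ as soon as the linear map $\Phi\colon\Ext^2(E(2),E(-5))\to\Hom\!\big(\HH^0(E(2)),\HH^2(E(-5))\big)$ is surjective.

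The core step, mirroring the proof of Theorem \ref{2-ist}, is to carry this surjectivity across Serre duality. The identifications $\HH^2(E(-5))^*\simeq\HH^1(E(1))$ and $\Ext^2(E(2),E(-5))^*\simeq\HH^1(S^2E(3))$ turn surjectivity of $\Phi$ into injectivity of the cup-product map $\Psi\colon\HH^0(E(2))\otimes\HH^1(E(1))\to\HH^1(S^2E(3))$. Here global generation enters: the evaluation sequence $0\to K\to\HH^0(E(2))\otimes\OO_{\PP^3}\to E(2)\to 0$ defines a rank-$2$ bundle $K$ (with $c_1(K)=-4$, $c_2(K)=8$, so $K(2)$ has the numerics of a $4$-instanton). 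Tensoring by $E(1)$, using $E\otimes E=S^2E\oplus\OO_{\PP^3}$ and the vanishings $\HH^0(E(1))=0$ (natural cohomology) and $\HH^1(\OO_{\PP^3}(3))=0$, I obtain an exact sequence in which $\ker\Psi$ is the cokernel of the connecting map $\HH^0(E(2)\otimes E(1))\to\HH^1(K\otimes E(1))$. Thus $\Psi$ is injective exactly when this connecting map is onto, equivalently when the residual map $\HH^1(K\otimes E(1))\to\HH^0(E(2))\otimes\HH^1(E(1))$ vanishes, and I would settle this by a cohomology computation for $K\otimes E(1)$, in the same spirit as the symmetric-square chase of Theorem \ref{2-ist}.

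The hard part will be exactly this last computation: controlling $\HH^1(K\otimes E(1))$. In the rank-eight case the corresponding injectivity was immediate because the geometry was fully explicit (a smooth quadric and three skew lines), whereas here the only grip on the kernel bundle $K$ is the abstract global generation of $E(2)$ together with the natural cohomology of $E$; since $K(2)$ shares the invariants $c_1=0$, $c_2=4$ of a $4$-instanton, its twisted cohomology is genuinely delicate and must be pinned down using the precise numerics above to force the residual map to vanish. Once that vanishing is secured, $\Phi$ is surjective, a general $\beta$ realises $(\diamond)$, and Theorems \ref{starstar gratis} and \ref{conto kuz gen} produce the desired $14\times 14$ skew-symmetric matrix of constant rank $12$.
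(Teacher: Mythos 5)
Your opening reductions are correct and coincide with the paper's own strategy: by Theorems \ref{conto kuz gen} and \ref{starstar gratis} everything comes down to exhibiting one $\beta$ realising $(\diamond)$, which follows once the natural map $\Phi\colon\Ext^2(E(2),E(-5))\to\Hom(\HH^0(E(2)),\HH^2(E(-5)))$ is surjective; your dimension counts ($\hh^0(E(2))=\hh^1(E(1))=4$, $\chi(S^2E(-7))=20$, stability killing $\HH^0(S^2E(-7))$) and your Serre-duality translation of surjectivity of $\Phi$ into injectivity of the cup product $\Psi\colon\HH^0(E(2))\otimes\HH^1(E(1))\to\HH^1(S^2E(3))$ are also fine, and mirror the paper's proof of Theorem \ref{2-ist}. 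But your proof stops exactly where the theorem lives. You tensor the evaluation sequence $0\to K\to U\otimes\OO_{\PP^3}\to E(2)\to 0$ by $E(1)$, correctly observe that $\Psi$ is injective if and only if the residual map $\HH^1(K\otimes E(1))\to U\otimes\HH^1(E(1))$ vanishes, and then announce that this ``hard'', ``genuinely delicate'' vanishing will be ``settled by a cohomology computation'' that is never carried out. That vanishing is not a routine verification to be left for later: through your own chain of equivalences it is exactly the surjectivity of $\Phi$, i.e.\ the entire content of the theorem. Moreover the group you propose to control, $\HH^1(K\otimes E(1))\simeq\HH^1(E^t\otimes E(-1))$ where $E^t=K(2)$ is a second charge-$4$ instanton, involves the tensor product of two distinct instantons, and nothing in the stated hypotheses (natural cohomology of $E$, global generation of $E(2)$) gives direct access to it.

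The paper closes this gap with a device you did not find, and which sidesteps $K\otimes E$ entirely. Dualise the evaluation sequence to get $0\to E(-2)\to U^*\otimes\OO_{\PP^3}\to E^t(2)\to 0$, take the associated four-term symmetric-power sequence and twist by $\OO_{\PP^3}(-3)$, obtaining
\[
0 \to S^2E(-7) \to U^*\otimes E(-5) \to \wedge^2 U^*\otimes\OO_{\PP^3}(-3) \to \OO_{\PP^3}(1)\to 0 .
\]
Splitting at the image $\FF$ of the middle map, the right-hand short exact sequence gives $\HH^2(\FF)=0$ because $\HH^2(\OO_{\PP^3}(-3))=\HH^1(\OO_{\PP^3}(1))=0$, and then the left-hand one yields the surjection $\HH^2(S^2E(-7))\twoheadrightarrow U^*\otimes\HH^2(E(-5))$, i.e.\ surjectivity of $\Phi$. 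The only cohomological input is the vanishing of line-bundle cohomology on $\PP^3$; no information about $\HH^1(K\otimes E(1))$ is ever needed. Until you supply an argument of comparable strength for your residual vanishing, your proposal is a correct sequence of reductions, but not a proof.
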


\begin{proof} 
  Analogously to what we did in the case of $2$-instantons, we show that there is a surjection:
  \[
 \HH^2(S^2 E(-7)) \epi \HH^0(E(2))^* \otimes \HH^2(E(-5)).
  \]

  Set $U=\HH^0(E(2)$.
  From the assumption that $E$ has natural cohomology, and applying
  Riemann-Roch to $E(2)$, we get $\dim(U)=4$.
  Evaluation of sections 
  provides a natural map $g : U \otimes \OO_{\PP^3} \to
  E(2)$, which is surjective by assumption.
  Set $E^t(-2):=\Ker (g)$.
  We have a short exact sequence:
  \begin{equation}\label{involuzione}
    0 \to E^t(-2) \to U \otimes \OO_{\PP^3} \to E(2) \to 0.
  \end{equation}
  In \cite[Thm. 1]{D'Almeida} it is shown that $E^t$ is an instanton bundle of charge $4$, and that in fact on an open subset of 
  the moduli space of $4$-instantons, the map $E \mapsto E^t$ is an involution with no fixed points.

  Dualise sequence (\ref{involuzione}), take its second symmetric
  power $S^2$, and tensor it with $\OO_{\PP^3}(-3)$.

  We get the following $4$-term exact sequence:
  \[
  0 \to S^2E(-5) \xr{h_1} U^* \otimes E(-5) \xr{h_2} \wedge^2 U^* \otimes
  \OO_{\PP^3}(-3) \xr{h_3} \OO_{\PP^3}(1) \to 0.
  \]
  Let $\FF$ be the image of the map $h_2$ above.
  Looking at the short exact sequence:
  \[
  0 \to \FF \to \wedge^2 U^* \otimes \OO_{\PP^3}(-3) \xr{h_3}
  \OO_{\PP^3}(1) \to 0,
  \]
  and taking cohomology, we see that $\HH^2(\FF)=0$.
  Then, taking cohomology of the short exact sequence:
  \[
    0 \to S^2E(-5) \xr{h_1} U^* \otimes E(-5) \to \FF \to 0,
  \]
  we get the required natural surjective map $\HH^2(S^2 E(-7))
  \epi \HH^0(E(2))^* \otimes \HH^2(E(-5))$.
  The theorem is thus proved.
\end{proof}


\appendix
\section*{Appendix: construction of explicit examples}

We outline here an algorithmic approach to the construction of skew-symmetric matrices 
in $4$ variables, having size $14$ and constant rank $12$. The algorithm is based on the commutative algebra system {\tt Macaulay2} \cite{M2}.
Since the system runs better over finite fields, we fix a finite
field $\kk$ of characteristic different from $2$ and we work over the
polynomial ring $R=\kk[x_0\ldots,x_3]$. Conceptually, the construction holds without modification in characteristic zero.

The algorithm goes as follows.

\begin{step}
  Construct a general $4$-instanton from an elliptic curve of degree $8$ in $\PP^3$.
\end{step}

Blow up a point $p$ in $\PP^2$ and embed the blown-up plane by the
system of cubics through $p$ as a Del Pezzo surface of degree $8$ in
$\PP^8$.
A hyperplane section $C_0$ of this surface is an elliptic curve of
degree $8$ in $\PP^7$.
By a general projection into $\PP^3$, we thus get a smooth elliptic curve
$C$ of degree $8$ in $\PP^3$.
Let $I_C$ be the ideal of $C$ in $R$.
Then $I_C$ has the following minimal graded free resolution:
\[
0 \to R(-7)^4 \to R(-6)^{10} \to R(-4)^3 \oplus R(-5)^4 \to I_C \to 0.
\]

Let $\bE$ be the kernel of the induced map $R(2)^3 \to I_C(6)$. It
turns out that $\bE$ is $\HH^0_*(E^t)$ (see the previous subsection), where $E$ is the instanton
associated via the Hartshorne-Serre correspondence to the curve $C$.
We denote by $\bM$ the module $\HH^2_*(E)$; it has Hilbert function
$4,6,4$ in degrees $-5,-4,-3$.

\begin{step}
  Use the $R$-module $\bE$, together with a surjective map of
  $R$-modules $k : \bE \to \bM(-7)$ to write a $14 \times 14$ matrix
  $B$ of linear forms. 
\end{step}

To explain this step, we remark that given an element $\beta \in
\Ext^2(E(2),E(-5))$, combining the maps $\mu^2_t$ (cf. Section
\ref{idea kuz gen}) for all $t
\in \Z$ we get a map of $R$-modules:
\[
\mu^2_* : \bE = \HH^0_*(E) \to \HH^2_*(E(-7))=\bM(-7).
\]
According to our construction, $\mu^2_*$ has to be an isomorphism in
degree $2$, and an epimorphism in higher degree. This epimorphism is obtained by linearity
from the isomorphism in degree $2$, since both $\bE$ and $\bM(-7)$ are
generated in degree $2$.
We have seen in the proof of Theorem \ref{4-ist} that all isomorphisms in degree $2$
come from an element $\beta \in \Ext^2(E(2),E(-5))$.
Hence we only need a general epimorphism $k : \bE \to \bM(-7)$ and in fact a
general morphism will do.
The system {\tt Macaulay2} is capable of providing such morphism
explicitly, and expresses $k$ as a map between the generators of
$\bE$ and those of $\bM$.

To complete the argument, we resolve the truncations $\bE_{\ge 3}$ and
$\bM_{\ge 3}$.
We get presentations $u: R(-1)^{34} \to R^{20} \to \bE_{\ge 3}(3)$ and 
$v: R(-1)^{20} \to R^{6} \to \bM_{\ge 3}(3)$.
Using the map $k_3$ from the expression of $\bE_{\ge 3}$ to those of
$\bM_{\ge 3}$ induced by $k$, we get the following commutative exact
diagram:
\[
\xymatrix{
R(-1)^{34} \ar@{.>}^-{h}[d] \ar^-{u}[r] & R^{20} \ar^-{k_3}[d]\ar[r] & \bE_{\ge 3}(3) \ar^-{k}[d]\ar[r] & 0\\
R(-1)^{20} \ar[d]\ar^-{v}[r] & R^{6} \ar[d]\ar[r] & \bM_{\ge 3}(3) \ar[d]\ar[r] & 0\\
0 & 0 & 0
}
\]
The map $h$ above is induced by the diagram.
Taking sygygies of the maps above, we can complete the diagram to the following:
\[
\xymatrix{
R(-1)^{14} \ar[d] \ar^-{B}[r] & R^{14} \ar[d]\ar[r] & \bF \ar[d]\ar[r] & 0\\
R(-1)^{34} \ar^-{h}[d] \ar^-{u}[r] & R^{20} \ar^-{k_3}[d]\ar[r] & \bE_{\ge 3}(3) \ar^-{k}[d]\ar[r] & 0\\
R(-1)^{20} \ar[d]\ar^-{v}[r] & R^{6} \ar[d]\ar[r] & \bM_{\ge 3}(3) \ar[d]\ar[r] & 0\\
0 & 0 & 0
}
\]
The module $\bF$ induced above is resolved by the matrix $B$.
Since $\bE$ and $\bF$ differ only by the Artinian module $\bM$, the
induced coherent sheaves on $\PP^3$ are isomorphic, hence $B$ has constant rank $2$.

\begin{step}
  Skew-symmetrise $B$ to obtain the required matrix $A$.
\end{step}

To perform this step, we note that $B$ and the opposite transpose matrix $-\transpose{B}$
give rise to two modules $\bF$ and $\bF'$ as their cokernels, and the
associated cokernel sheaves are isomorphic.
These sheaves are stable, and thus simple, so any endomorphism of each
one of them is a multiple of the identity.
Therefore, we can consider a random morphism $\delta$ from $\bF$ to $\bF'$ to
build an isomorphism from the resolution of $\bF$ to that of $\bF'$.
Then we get an exact commutative diagram:
\[
\xymatrix{
R(-1)^{14} \ar^-{\transpose{\Delta}}[d] \ar^-{B}[r] & R^{14} \ar^{\Delta}[d]\ar[r] & \bF \ar^{\delta}[d]\ar[r] & 0\\
R(-1)^{14} \ar^-{-\transpose{B}}[r] & R^{14} \ar[r] & \bF' \ar[r] & 0,
}\]
where the matrix (of scalars) $\Delta$ is invertible.
Therefore $A = \Delta B$ is skew-symmetric, and its cokernel is
$\bF'$, so again we deduce that $A$ has constant rank $12$.

We conclude by exhibiting such a matrix $A$. To avoid cumbersome coefficients and make the matrix more readable, let us 
work on the field $\kk = \Z/7\Z$. Then $A=x_0A_0+x_1A_1+x_2A_2+x_3A_3$, where the $A_i$'s are the following skew-symmetric matrices:

\begin{scriptsize}
\[
A_0 = \bgroup\makeatletter\c@MaxMatrixCols=14\makeatother\begin{pmatrix}0&
       {-2}&
       {-1}&
       {-3}&
       {3}&
       {3}&
       {3}&
       {3}&
       1&
       {-3}&
       1&
       1&
       1&
       {-3}\\
       {2}&
       0&
       {-1}&
       {-2}&
       {3}&
       0&
       {-3}&
       {-2}&
       0&
       {3}&
       0&
       0&
       {-2}&
       0\\
       1&
       1&
       0&
       {3}&
       {-3}&
       {2}&
       {-3}&
       {-2}&
       {-3}&
       {-3}&
       {-2}&
       {-2}&
       {-3}&
       {-3}\\
       {3}&
       {2}&
       {-3}&
       0&
       {2}&
       {-3}&
       1&
       0&
       {2}&
       1&
       {3}&
       {-1}&
       0&
       1\\
       {-3}&
       {-3}&
       {3}&
       {-2}&
       0&
       {-2}&
       {-3}&
       {3}&
       {-3}&
       {-2}&
       {-1}&
       {2}&
       {2}&
       1\\
       {-3}&
       0&
       {-2}&
       {3}&
       {2}&
       0&
       {3}&
       {-3}&
       {-3}&
       1&
       {3}&
       {-1}&
       {-3}&
       1\\
       {-3}&
       {3}&
       {3}&
       {-1}&
       {3}&
       {-3}&
       0&
       1&
       {3}&
       {-1}&
       {3}&
       {-2}&
       {-1}&
       {-2}\\
       {-3}&
       {2}&
       {2}&
       0&
       {-3}&
       {3}&
       {-1}&
       0&
       {3}&
       {3}&
       {2}&
       {-3}&
       {-1}&
       0\\
       {-1}&
       0&
       {3}&
       {-2}&
       {3}&
       {3}&
       {-3}&
       {-3}&
       0&
       {-1}&
       {3}&
       {-2}&
       {-3}&
       {-3}\\
       {3}&
       {-3}&
       {3}&
       {-1}&
       {2}&
       {-1}&
       1&
       {-3}&
       1&
       0&
       {-1}&
       {-2}&
       {2}&
       {-1}\\
       {-1}&
       0&
       {2}&
       {-3}&
       1&
       {-3}&
       {-3}&
       {-2}&
       {-3}&
       1&
       0&
       1&
       {-3}&
       {-1}\\
       {-1}&
       0&
       {2}&
       1&
       {-2}&
       1&
       {2}&
       {3}&
       {2}&
       {2}&
       {-1}&
       0&
       {3}&
       {-2}\\
       {-1}&
       {2}&
       {3}&
       0&
       {-2}&
       {3}&
       1&
       1&
       {3}&
       {-2}&
       {3}&
       {-3}&
       0&
       1\\
       {3}&
       0&
       {3}&
       {-1}&
       {-1}&
       {-1}&
       {2}&
       0&
       {3}&
       1&
       1&
       {2}&
       {-1}&
       0\\
       \end{pmatrix}\egroup\]
\[A_1=\bgroup\makeatletter\c@MaxMatrixCols=14\makeatother\begin{pmatrix}0&
       {-2}&
       {-2}&
       {3}&
       {-3}&
       0&
       {-2}&
       {-3}&
       {3}&
       {-1}&
       {2}&
       0&
       {2}&
       {-3}\\
       {2}&
       0&
       {3}&
       {-1}&
       1&
       {2}&
       {-3}&
       {-1}&
       {-2}&
       {-1}&
       {-1}&
       {-3}&
       1&
       {2}\\
       {2}&
       {-3}&
       0&
       {-2}&
       1&
       1&
       1&
       {-1}&
       {2}&
       {-3}&
       0&
       {-3}&
       {2}&
       {-3}\\
       {-3}&
       1&
       {2}&
       0&
       {2}&
       {-1}&
       1&
       {-2}&
       {-1}&
       {-2}&
       1&
       {2}&
       {2}&
       {-3}\\
       {3}&
       {-1}&
       {-1}&
       {-2}&
       0&
       1&
       {-1}&
       1&
       {-2}&
       0&
       {-1}&
       {2}&
       0&
       0\\
       0&
       {-2}&
       {-1}&
       1&
       {-1}&
       0&
       {3}&
       0&
       {-2}&
       {2}&
       {2}&
       {-3}&
       {-3}&
       1\\
       {2}&
       {3}&
       {-1}&
       {-1}&
       1&
       {-3}&
       0&
       {-3}&
       {2}&
       {3}&
       {-1}&
       {-2}&
       {-2}&
       {3}\\
       {3}&
       1&
       1&
       {2}&
       {-1}&
       0&
       {3}&
       0&
       1&
       1&
       {3}&
       0&
       {3}&
       {-1}\\
       {-3}&
       {2}&
       {-2}&
       1&
       {2}&
       {2}&
       {-2}&
       {-1}&
       0&
       {2}&
       {-1}&
       {-3}&
       1&
       {2}\\
       1&
       1&
       {3}&
       {2}&
       0&
       {-2}&
       {-3}&
       {-1}&
       {-2}&
       0&
       {-1}&
       1&
       {3}&
       {-1}\\
       {-2}&
       1&
       0&
       {-1}&
       1&
       {-2}&
       1&
       {-3}&
       1&
       1&
       0&
       {-3}&
       {2}&
       {-3}\\
       0&
       {3}&
       {3}&
       {-2}&
       {-2}&
       {3}&
       {2}&
       0&
       {3}&
       {-1}&
       {3}&
       0&
       {3}&
       {-1}\\
       {-2}&
       {-1}&
       {-2}&
       {-2}&
       0&
       {3}&
       {2}&
       {-3}&
       {-1}&
       {-3}&
       {-2}&
       {-3}&
       0&
       0\\
       {3}&
       {-2}&
       {3}&
       {3}&
       0&
       {-1}&
       {-3}&
       1&
       {-2}&
       1&
       {3}&
       1&
       0&
       0\\
       \end{pmatrix}\egroup\]
\[A_2=\bgroup\makeatletter\c@MaxMatrixCols=14\makeatother\begin{pmatrix}0&
       {2}&
       {2}&
       {-3}&
       {3}&
       {2}&
       {-1}&
       {-1}&
       1&
       1&
       0&
       {2}&
       {-3}&
       {-2}\\
       {-2}&
       0&
       {-2}&
       {3}&
       {3}&
       {-1}&
       1&
       {-1}&
       {-3}&
       {-2}&
       1&
       {-3}&
       {-2}&
       {-2}\\
       {-2}&
       {2}&
       0&
       1&
       {3}&
       1&
       {3}&
       {2}&
       {2}&
       {3}&
       {2}&
       1&
       0&
       {-3}\\
       {3}&
       {-3}&
       {-1}&
       0&
       {-3}&
       {-1}&
       1&
       {-3}&
       {3}&
       {-1}&
       {-3}&
       {2}&
       {-3}&
       1\\
       {-3}&
       {-3}&
       {-3}&
       {3}&
       0&
       {3}&
       {-2}&
       {-3}&
       {3}&
       1&
       {-3}&
       {-1}&
       0&
       {2}\\
       {-2}&
       1&
       {-1}&
       1&
       {-3}&
       0&
       {3}&
       {-2}&
       0&
       {-2}&
       0&
       {-2}&
       {-2}&
       {-2}\\
       1&
       {-1}&
       {-3}&
       {-1}&
       {2}&
       {-3}&
       0&
       0&
       {2}&
       {-1}&
       {-2}&
       {-3}&
       {2}&
       {-2}\\
       1&
       1&
       {-2}&
       {3}&
       {3}&
       {2}&
       0&
       0&
       {-2}&
       0&
       {2}&
       {-2}&
       0&
       {3}\\
       {-1}&
       {3}&
       {-2}&
       {-3}&
       {-3}&
       0&
       {-2}&
       {2}&
       0&
       {-1}&
       {-1}&
       {-1}&
       0&
       {-1}\\
       {-1}&
       {2}&
       {-3}&
       1&
       {-1}&
       {2}&
       1&
       0&
       1&
       0&
       {-2}&
       {3}&
       {-2}&
       {3}\\
       0&
       {-1}&
       {-2}&
       {3}&
       {3}&
       0&
       {2}&
       {-2}&
       1&
       {2}&
       0&
       {-3}&
       {3}&
       {-1}\\
       {-2}&
       {3}&
       {-1}&
       {-2}&
       1&
       {2}&
       {3}&
       {2}&
       1&
       {-3}&
       {3}&
       0&
       0&
       {-1}\\
       {3}&
       {2}&
       0&
       {3}&
       0&
       {2}&
       {-2}&
       0&
       0&
       {2}&
       {-3}&
       0&
       0&
       {3}\\
       {2}&
       {2}&
       {3}&
       {-1}&
       {-2}&
       {2}&
       {2}&
       {-3}&
       1&
       {-3}&
       1&
       1&
       {-3}&
       0\\
       \end{pmatrix}\egroup\]
\[A_3=\bgroup\makeatletter\c@MaxMatrixCols=14\makeatother\begin{pmatrix}0&
       {-3}&
       {2}&
       {-3}&
       {-1}&
       {-1}&
       {3}&
       {-2}&
       {3}&
       {3}&
       {3}&
       0&
       {-3}&
       {-3}\\
       {3}&
       0&
       {-3}&
       1&
       1&
       {2}&
       {-1}&
       {-3}&
       {-1}&
       0&
       {3}&
       {-3}&
       0&
       {-1}\\
       {-2}&
       {3}&
       0&
       1&
       {-1}&
       0&
       {-1}&
       {-2}&
       {3}&
       0&
       {-1}&
       {-2}&
       1&
       {-2}\\
       {3}&
       {-1}&
       {-1}&
       0&
       {3}&
       {2}&
       {-1}&
       0&
       1&
       {-3}&
       {-3}&
       {-1}&
       {-1}&
       {3}\\
       1&
       {-1}&
       1&
       {-3}&
       0&
       {3}&
       {3}&
       0&
       0&
       {-3}&
       {-3}&
       {3}&
       {2}&
       {3}\\
       1&
       {-2}&
       0&
       {-2}&
       {-3}&
       0&
       {-3}&
       {3}&
       {3}&
       {-3}&
       {-3}&
       {-2}&
       1&
       1\\
       {-3}&
       1&
       1&
       1&
       {-3}&
       {3}&
       0&
       0&
       {3}&
       {2}&
       0&
       {-3}&
       {2}&
       0\\
       {2}&
       {3}&
       {2}&
       0&
       0&
       {-3}&
       0&
       0&
       0&
       1&
       {-1}&
       {-2}&
       1&
       1\\
       {-3}&
       1&
       {-3}&
       {-1}&
       0&
       {-3}&
       {-3}&
       0&
       0&
       {3}&
       {2}&
       {-3}&
       {-1}&
       {-1}\\
       {-3}&
       0&
       0&
       {3}&
       {3}&
       {3}&
       {-2}&
       {-1}&
       {-3}&
       0&
       {-2}&
       {-2}&
       {-3}&
       {-2}\\
       {-3}&
       {-3}&
       1&
       {3}&
       {3}&
       {3}&
       0&
       1&
       {-2}&
       {2}&
       0&
       1&
       {2}&
       {-3}\\
       0&
       {3}&
       {2}&
       1&
       {-3}&
       {2}&
       {3}&
       {2}&
       {3}&
       {2}&
       {-1}&
       0&
       {2}&
       {-2}\\
       {3}&
       0&
       {-1}&
       1&
       {-2}&
       {-1}&
       {-2}&
       {-1}&
       1&
       {3}&
       {-2}&
       {-2}&
       0&
       {-3}\\
       {3}&
       1&
       {2}&
       {-3}&
       {-3}&
       {-1}&
       0&
       {-1}&
       1&
       {2}&
       {3}&
       {2}&
       {3}&
       0\\
       \end{pmatrix}\egroup\]
\end{scriptsize}

\bibliographystyle{amsalpha}
\bibliography{biblioada}

\end{document}